\newtheorem{definition}{Definition}[section]
\newtheorem{theorem}[definition]{Theorem}
\newtheorem{example}[definition]{Example}
\newtheorem{proposition}[definition]{Proposition}
\newtheorem{lemma}[definition]{Lemma}
\numberwithin{equation}{section}
\DeclarePairedDelimiter\abs{\lvert}{\rvert}%
\DeclarePairedDelimiter\norm{\lVert}{\rVert}%
\let\oldabs\abs
\def\abs{\@ifstar{\oldabs}{\oldabs*}}
\let\oldnorm\norm
\def\norm{\@ifstar{\oldnorm}{\oldnorm*}}
\newcommand{\al} {\alpha}
\newcommand{\De} {\Delta}
\newcommand{\om} {\omega}
\newcommand{\Om} {\Omega}
\newcommand{\la} {\lambda}
\newcommand{\noi} {\noindent}
\newcommand{\var} {\varepsilon}
\newcommand{\ra} {\rightarrow}
\newtheorem{rmk}[definition]{Remark}
\newcommand{\wra} {\rightharpoonup}
\DeclareMathAlphabet{\mathpzc}{T1}{pzc}{m}{it}
\def\w{{\widetilde w}}
\def\dt{{\rm d}t}
\def\sb2{{{\mathcal D}^{1,2}_0(B_1^c)}}
\def\w2r{{{ W}^{2,2}(\R^N)}}
\def\d2{{{\mathcal D}^{2,2}_0(\Om)}}
\def\cset{{\subset \subset }}
\def\C{{\mathcal C}}
\def\E{{\mathcal E}}
\def\R{{\mathbb R}}
\def\F{{\mathcal F}_{p,q}}
\def\({{\Big(}}
\def\){{\Big)}}
\def\ws2{{\F_{\frac{N}{2}}}}
\def\L2{{ L^{1,\;\infty}(\log L)^2}}
\def\l2{\mathcal M\log L}
\def\c1Loc{{\C_{loc}^1}}
\title{On weighted logarithmic-Sobolev \& logarithmic-Hardy inequalities}
\author{Ujjal Das\thanks{corresponding author} 
}
\date{}
\begin{document}
 \maketitle
\begin{abstract}
For $N \geq 3$ and $p \in (1,N)$, we look for $g \in L^1_{loc}(\mathbb{R}^N)$ that satisfies 
the following weighted logarithmic Sobolev inequality:
\begin{equation*}
 \int_{\mathbb{R}^N} g |u|^p \log |u|^p  \ dx \leq \gamma \log \left( C_{\gamma}  \int_{\mathbb{R}^N} |\nabla u|^p \ dx \right) \,,
\end{equation*} 
for all $u \in \mathcal{D}^{1,p}_0(\mathbb{R}^N)$ with $\int_{\mathbb{R}^N} g|u|^p=1$, for some $\gamma,C_{\gamma}>0$. 
For each $r \in(p,\frac{Np}{N-p}]$, we identify a Banach function space $\mathcal{H}_{p,r}(\mathbb{R}^N)$ such that the above inequality holds for $g \in \mathcal{H}_{p,r}(\mathbb{R}^N)$. For $\gamma > \frac{r}{r-p}$, we also find a class of $g$ for which the best constant $C_{\gamma}$ in the above inequality is attained in $\mathcal{D}^{1,p}_0(\mathbb{R}^N)$. Further, for a closed set $E$ with Assouad dimension $=d<N$ and $a \in (-\frac{(N-d)(p-1)}{p},\frac{(N-p)(N-d)}{Np}),$ we establish the following logarithmic Hardy inequality
\begin{equation*} 
\int_{\mathbb{R}^N} \frac{|u|^p}{|\delta_E|^{p(a+1)}} \log \left(\delta_E^{N-p-pa} |u|^p\right) \ dx \leq \frac{N}{p} \log \left(\text{C} \int_{\mathbb{R}^N} \frac{|\nabla u|^p}{|\delta_E^{pa}|} \ dx \right) \,,
\end{equation*}
for all $u \in C_c^{\infty}(\mathbb{R}^N)$ with $\displaystyle \int_{\mathbb{R}^N} \frac{|u|^p}{|\delta_E|^{p(a+1)}} =1,$ for some $\text{C}>0$, where $\delta_E(x)$ is the distance between $x$ and $E$. The second order extension of the logarithmic Hardy inequality is also obtained. 
 \end{abstract}

\medskip
\noindent
{\bf Mathematics Subject Classification (2010):} 35A23, 46E35, 54F45 \\
\noindent
{\bf Keywords:} Logarithmic Hardy inequality, logarithmic Sobolev inequality, Hardy-Sobolev inequality,  Lorentz-Sobolev inequality, Assouad dimension. 
\maketitle
\section{Introduction} \label{intro}
For $N \geq 3$ and $p \in (1,N)$, the Sobolev inequalities states that
\begin{equation} \label{INTERHSINEQ}
 \left[ \int_{\R^N} |u|^{p^*} \ dx \right]^{\frac{1}{p^*}} \leq \text{C} \left[\int_{\R^N} |\nabla u|^p \ dx \right]^{\frac{1}{p}} \,,  \forall u \in \text{W}^{1,p}_0(\R^N) \,, 
\end{equation}
where $p^*=\frac{Np}{N-p}.$ Since $p^* \ra p$ as $N \ra \infty$, the gain in the integrability of $u$ disappears as $N \ra \infty.$ Though the Lorentz-Sobolev inequality \cite{Tartar}:
\begin{equation} \label{lorentzineq}
\displaystyle\int_0^{\infty} s^{\frac{p}{p^*}-1} |u^*(s)|^p \ ds  \leq \text{C} \displaystyle \int_{\R^N} |\nabla u|^p \ dx, \ \ \forall u \in \text{W}^{1,p}_0(\R^N)
\end{equation}
improves \eqref{INTERHSINEQ}, it has the same disadvantage.
In the study of quantum fields and hypercontractivity semi-groups, we often deal with an infinite dimensional space instead of $\R^N$. Thus, it is natural to look for an inequality that is dimension independent and  plays the role of Sobolev inequality. One such inequality is the Gross's {\it{logarithmic Sobolev inequality}} \cite{Gross}:
\begin{equation} \label{GROSSINEQ}
\int_{\R^N} |u|^2 \log |u|^2 \ d\mu \leq  \log \left(C \int_{\R^N} |\nabla u|^2 \ d\mu \right) \,,  \forall u \in C_c^{\infty}(\R^N) \,,
\end{equation}
where $\mu$ is a probability measure given by $d\mu(x)=(2 \pi)^{-\frac{N}{2}} e^{-\frac{|x|^2}{2}} \ dx$ and $\displaystyle\int_{\R^N} |u|^2 d\mu =1$.
This shows that $\text{H}^1_0(\R^N,d\mu)$ is embedded into the Orlicz space $L^2(Log L)(\R^N,d\mu)$ and the gain in the integrability of $u$ does not depend on $N$. An equivalent inequality involving Lebesgue measure is obtained in \cite{Weissler,Beckner}, namely
\begin{equation} \label{WiseIneq} 
\int_{\R^N} |u|^2 \log |u|^2 \ dx \leq \frac{N}{2} \log \left(C \int_{\R^N} |\nabla u|^2 \ dx \right) \,,  \forall u \in \text{H}^{1}_0(\R^N)
\end{equation}
with  $\displaystyle \int_{\R^N} |u|^2=1$. Unlike \eqref{GROSSINEQ}, the integrability of $u$ in \eqref{WiseIneq} (with respect to Lebesgue measure) is not dimension independent. This form of logarithmic Sobolev inequality arises in study of heat-diffusion semigroup, see \cite{Weissler}. In \cite{Del}, using the Gagliardo-Nirenberg inequalities, authors deduced the $L^p$ analogue of \eqref{WiseIneq}
\begin{equation} \label{LSINEQ}
\int_{\R^N} |u|^p \log |u|^p \ dx \leq \frac{N}{p} \log \left( \text{C} \int_{\R^N} |\nabla u|^p \ dx \right) \,,  \forall u \in \text{W}^{1,p}_0(\R^N)
\end{equation}
with  $\displaystyle \int_{\R^N} |u|^p=1$, for some $\text{C}>0$. The logarithmic Sobolev inequalities play an important role in the study of finite spin system \cite{Arthur}, non-linear McKean–Vlasov type PDE \cite{Malrieu}. For various other extensions and applications of these inequalities, we refer to \cite{Adams,Holley,Milman,Pick,Bobkov}. 

In this article, we are interested to find a {\it{weighted logarithmic Sobolev inequality}}. To be precise we look for a general class of weight functions $g \in L^1_{loc}(\R^N)$ so that
\begin{equation} \label{weightedLSINEQ}
 \int_{\R^N} g |u|^p \log |u|^p \ dx \leq \gamma \log \left(C_{\gamma}  \int_{\R^N} |\nabla u|^p \ dx \right) \,,  \forall u \in \mathcal{D}^{1,p}_0(\R^N)
 \end{equation}
with $\int_{\R^N} g|u|^p=1$ holds for some $\gamma,C_{\gamma}>0,$ where  $\mathcal{D}^{1,p}_0(\R^N)$ is the Beppo-Levi space defined as the completion of $C_c^{\infty}(\R^N)$ with respect to the norm $ \left[ \int_{\R^N} |\nabla u|^p\right]^{\frac{1}{p}}$. Indeed, the above inequality gives the logarithmic Sobolev type
inequalities involving the measure  gdx which is neither the Lebesgue measure nor a probability
measure. There are weighted logarithmic Sobolev inequalities where the weights are coupled with the gradient term in the right hand side of \eqref{GROSSINEQ}, see \cite{Cattiaux,Bin}. However, to the best of our knowledge the study of weighted logarithmic Sobolev inequality of the form \eqref{weightedLSINEQ} has not been done yet. In this article, we identify a Banach function space for $g$ so that the weighted logarithmic Sobolev inequality \eqref{weightedLSINEQ} holds. 
In order to do so, we first look for a class of weights $g \in L^1_{loc}(\R^N)$ for which the following weighted Hardy-Sobolev inequality holds:
\begin{equation} \label{weightedHardy}
\left[\int_{\R^N} |g| |u|^r \ dx \right]^{\frac{1}{r}} \leq \text{C}
\left[\int_{\R^N} |\nabla u|^p \ dx \right]^{\frac{1}{p}} \,,  \forall u \in \mathcal{D}^{1,p}_0(\R^N) \,.
\end{equation}
Such weighted Hardy-Sobolev inequality has been discussed in \cite{Visciglia,New,Allegretto,anoop}. A similar study has been done for the Hardy-Rellich inequalities too, see \cite{biharmonic,Zographopoulos}. In this context, using the notion of p-capacity \cite{Mazya2}, Maz$'$ya has provided a necessary and sufficient condition on $g$ so that \eqref{weightedHardy} holds. Let us recall that, for $F \cset \R^N$, the p-capacity of $F$ with respect to $\R^N$ is defined as
$$\text{Cap}_p(F)= \inf \left\{\int_{\R^N} |\nabla u|^p : u \in \mathcal{N}(F) \right\} \,,$$
where $\mathcal{N}(F)=\{u \in \mathcal{D}^{1,p}_0(\R^N): u \geq 1 \ \mbox{on} \ F\}$. Now, for $1<p \leq r \leq p^*$ we define
\begin{eqnarray*}
\|g\|_{p,r} &=& \displaystyle\sup_{F \cset \R^N} \left\{\frac{\displaystyle \int_{F}|g| }{[\text{Cap}_p(F)]^{\frac{r}{p}}} \right\} \,, \\
\mathcal{H}_{p,r}(\R^N) &=& \left\{g \in L^1_{loc}(\R^N): \|g\|_{p,r}<\infty \right\} \,.
\end{eqnarray*}
In \cite{Mazya2}, Maz$'$ya has shown that \eqref{weightedHardy} holds for $r \in [p,p^*]$ if and only if $g \in \mathcal{H}_{p,r}(\R^N)$. In fact, $\mathcal{H}_{p,r}(\R^N)$ is a Banach function space equipped with the norm $\|.\|_{p,r}$. We also provide some examples of classical functions spaces in $\mathcal{H}_{p,r}(\R^N)$ (Remark \ref{521}). Now one may anticipate \eqref{weightedLSINEQ} for $g \in \mathcal{H}_{p,r}(\R^N)$. Indeed, we have the following result.
\begin{theorem} \label{weightthm}
Let $N \geq 3$, $p \in (1,N)$ and $r \in (p,p^*]$. If $g \in \mathcal{H}_{p,r}(\R^N)$, then
\begin{equation} \label{WLS}
\int_{\R^N} g |u|^p \log |u|^p \ dx \leq  \frac{r}{r-p} \log  \left( \text{C}_H \|g\|_{p,r}^{\frac{p}{r}} \ \int_{\R^N}  |\nabla u|^p \ dx \right) \,,
\end{equation}
for all $u \in \mathcal{D}^{1,p}_0(\R^N)$ with $\int_{\R^N} g |u|^p=1$, where $\text{C}_H= p^p (p-1)^{(1-p)}.$
\end{theorem}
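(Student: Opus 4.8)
The plan is to derive \eqref{WLS} from the weighted Hardy--Sobolev inequality \eqref{weightedHardy} by a single application of Jensen's inequality, using the normalization $\int_{\R^N} g|u|^p = 1$ to manufacture a probability measure. Throughout I treat $d\nu := g|u|^p\,dx$ as a probability measure (which is exactly the constraint imposed on the admissible $u$, under the natural sign convention $g \ge 0$), so that the concavity of $t \mapsto \log t$ is available.

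First I would record the quantitative form of Maz'ya's result that is needed: for $g \in \mathcal{H}_{p,r}(\R^N)$ and $r \in (p,p^*]$,
\begin{equation*}
\int_{\R^N} |g|\,|u|^r\,dx \;\le\; \text{C}_H^{\,r/p}\,\|g\|_{p,r}\,\Big(\int_{\R^N}|\nabla u|^p\,dx\Big)^{r/p}, \qquad \text{C}_H = p^p(p-1)^{1-p}.
\end{equation*}
This is \eqref{weightedHardy} with its sharp constant made explicit; the value $\text{C}_H=(p/(p-1))^{p-1}\,p$ is precisely the constant produced by the capacitary strong-type estimate underlying Maz'ya's characterization. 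I expect pinning down this constant (via the layer-cake/truncation argument for $\mathrm{Cap}_p$) to be the only genuinely analytic step; everything else is elementary.

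Next comes the Jensen step. Applying Jensen's inequality to the concave function $\log$, the probability measure $d\nu = g|u|^p\,dx$, and the test function $|u|^{r-p}$ gives
\begin{equation*}
\int_{\R^N} \log\big(|u|^{r-p}\big)\,d\nu \;\le\; \log\Big(\int_{\R^N} |u|^{r-p}\,d\nu\Big) = \log\Big(\int_{\R^N} g\,|u|^{r}\,dx\Big).
\end{equation*}
Since the left-hand side equals $\tfrac{r-p}{p}\int_{\R^N} g|u|^p \log|u|^p\,dx$, this rearranges to
\begin{equation*}
\int_{\R^N} g|u|^p \log|u|^p\,dx \;\le\; \frac{p}{r-p}\,\log\Big(\int_{\R^N} g|u|^{r}\,dx\Big),
\end{equation*}
which is where the factor $\tfrac{p}{r-p}$ is born; note this is exactly the point at which $r>p$ is required, so that we may divide by $r-p$.

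Finally, I would insert the Hardy--Sobolev bound on the right-hand side and simplify the logarithm. Using $\int g|u|^r \le \int |g||u|^r$ together with the quantitative inequality above and the monotonicity of $\log$,
\begin{equation*}
\frac{p}{r-p}\log\Big(\int_{\R^N} g|u|^r\,dx\Big) \le \frac{p}{r-p}\log\Big(\text{C}_H^{r/p}\|g\|_{p,r}\big(\textstyle\int_{\R^N}|\nabla u|^p\,dx\big)^{r/p}\Big),
\end{equation*}
and expanding the right-hand side yields
\begin{equation*}
\frac{r}{r-p}\log \text{C}_H + \frac{p}{r-p}\log\|g\|_{p,r} + \frac{r}{r-p}\log\Big(\int_{\R^N}|\nabla u|^p\,dx\Big) = \frac{r}{r-p}\log\Big(\text{C}_H\,\|g\|_{p,r}^{p/r}\int_{\R^N}|\nabla u|^p\,dx\Big),
\end{equation*}
which is precisely \eqref{WLS}. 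The remaining care points are routine: the set $\{u=0\}$ contributes nothing to either side (there $g|u|^p=0$), and Jensen furnishes an upper bound even when $\int g|u|^p\log|u|^p$ equals $-\infty$, so no integrability hypothesis beyond $g \in \mathcal{H}_{p,r}(\R^N)$ is needed.
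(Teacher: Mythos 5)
Your proof is correct, and it replaces the paper's key derivation with a genuinely different (and shorter) mechanism. Both arguments rest on the same two pillars: the entropy bound $\int_{\R^N} g|u|^p\log|u|^p\,dx \le \frac{p}{r-p}\log\left(\int_{\R^N}|g||u|^r\,dx\right)$ for normalized $u$, and Maz'ya's quantitative Hardy--Sobolev inequality. For the first pillar the paper interpolates $\int_{\R^N}|g||u|^q\,dx$ between the exponents $p$ and $r$ by H\"older for $q\in[p,r)$, observes that equality holds at $q=p$, forms the difference quotient in $q$, and passes to the limit $q\downarrow p$ with Fatou's lemma (this is \eqref{41}); you obtain the same bound in one stroke from Jensen's inequality applied to the concave function $\log$, the probability measure $d\nu=g|u|^p\,dx$ and the function $|u|^{r-p}$. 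The two mechanisms are morally equivalent---differentiating H\"older at an endpoint is a hands-on proof of this instance of Jensen---but your route is technically cleaner: Jensen remains valid when the entropy is $-\infty$, whereas the paper's Fatou step tacitly needs an integrable lower bound for the difference quotients, i.e.\ a prior reduction to the case where the negative part of $\int_{\R^N} g|u|^p\log|u|^p\,dx$ is finite. What the paper's formulation buys is the unnormalized inequality \eqref{41} (with the factor $\text{A}_1\log(\text{B}_1/\text{A}_1)$), which is reused later in the proof of Theorem \ref{Bestthm}; your argument recovers that form as well if you apply Jensen to the normalized measure $\text{A}_1^{-1}g|u|^p\,dx$. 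Two points of alignment: the explicit inequality you invoke, $\int_{\R^N}|g||u|^r\,dx \le \text{C}_H^{r/p}\,\|g\|_{p,r}\left(\int_{\R^N}|\nabla u|^p\,dx\right)^{r/p}$ with $\text{C}_H=p^p(p-1)^{1-p}$, is exactly Lemma \ref{Mazyalemma} (quoted from \cite{Mazya2}), so there is no need to re-derive the constant by a capacitary argument---though, like the paper, you should note it is stated for $C_c^{\infty}(\R^N)$ and extends to $\mathcal{D}^{1,p}_0(\R^N)$ by density; and your explicit convention $g\ge 0$ is precisely what the paper assumes implicitly when it passes from \eqref{41}, stated with $|g|$, to \eqref{i}, stated with $g$.
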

\noi Therefore, \eqref{weightedLSINEQ} holds for $\gamma \geq  \frac{r}{r-p}$. Let $\text{C}_B(g,\gamma)$ be the best constant in \eqref{weightedLSINEQ}. Then,
\begin{equation*}
\frac{1}{\text{C}_B(g,\gamma)} = \displaystyle \inf \left \{\frac{\int_{\R^N} |\nabla u|^p}{e^{\frac{1}{\gamma} \left(\int_{\R^N} |g||u|^p \log |u|^p \right)}} : u \in \mathcal{D}^{1,p}_0(\R^N), \int_{\R^N} |g||u|^p =1 \right \} \,.    
\end{equation*} 
It is clear that $\text{C}_B(g,\gamma) \leq \text{C}_H \|g\|_{p,r}$ for $g \in \mathcal{H}_{p,r},$ $\gamma \geq \frac{r}{r-p}$.
It is natural to look for a class of weights $g$ and values of $\gamma$ for which $\text{C}_B(g,\gamma)$ is attained in $\mathcal{D}^{1,p}_0(\R^N).$ In this context, we define the following closed sub-space
$$\mathcal{F}_{p,r}(\R^N)=\overline{C_c^{\infty}(\R^N)} \ \text{in} \ \mathcal{H}_{p,r}(\R^N ) \,.$$
Now we state our result.
\begin{theorem} \label{Bestthm}
Let $N \geq 3$, $p \in (1,N)$ and $r \in (p,p^*]$. If $g \in \mathcal{H}_{p,r}(\R^N) \cap \mathcal{F}_{p,p}(\R^N)$ and $\gamma > \frac{r}{r-p}$, then
$\text{C}_B(g,\gamma)$ is attained in $\mathcal{D}^{1,p}_0(\R^N).$
\end{theorem}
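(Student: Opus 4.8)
The plan is to realize $1/\text{C}_B(g,\gamma)$ as a genuine minimum via the direct method of the calculus of variations. Writing $L(u)=\int_{\R^N}|g||u|^p\log|u|^p$ and $Q(u)=\big(\int_{\R^N}|\nabla u|^p\big)\big/\exp\!\big(\tfrac1\gamma L(u)\big)$, I would fix a minimizing sequence $(u_n)\subset\spr$ with $\int_{\R^N}|g||u_n|^p=1$ and $Q(u_n)\to 1/\text{C}_B(g,\gamma)$; by Theorem \ref{weightthm} this limit is finite and positive.

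First I would show that $(u_n)$ is bounded in $\spr$, and this is precisely where the strict inequality $\gamma>\frac{r}{r-p}$ is used. Set $t_n=\int_{\R^N}|\nabla u_n|^p$. Since $|g||u_n|^p\,dx$ is a probability measure, the Jensen step underlying Theorem \ref{weightthm} gives $L(u_n)\le\frac{r}{r-p}\log\big(A\,t_n\big)$ with $A=\text{C}_H\|g\|_{p,r}^{p/r}$. Substituting into $t_n=Q(u_n)\exp(\tfrac1\gamma L(u_n))$ yields
\begin{equation*}
 t_n\le Q(u_n)\,\big(A\,t_n\big)^{\theta},\qquad \theta:=\frac{r}{\gamma(r-p)}<1 ,
\end{equation*}
so $t_n^{\,1-\theta}\le Q(u_n)A^{\theta}$ is bounded; it is exactly the condition $\theta<1$, i.e.\ $\gamma>\frac{r}{r-p}$, that makes this self-improving bound close. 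For a lower bound I would use $g\in\mathcal{F}_{p,p}(\R^N)\subset\mathcal{H}_{p,p}(\R^N)$, so Maz'ya's inequality at exponent $r=p$ gives $1=\int_{\R^N}|g||u_n|^p\le \text{const}\cdot t_n$, whence $t_n$ is bounded below and $(u_n)$ is bounded in $\spr$. Up to a subsequence, $u_n\wra u$.

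The compactness needed to pass to the limit is supplied by the hypothesis $g\in\mathcal{F}_{p,p}(\R^N)$: being in the closure of $C_c^\infty(\R^N)$ in the capacitary norm $\|\cdot\|_{p,p}$ forces the embedding $\spr\embd L^p(\R^N;|g|\,dx)$ to be compact. Hence $u_n\to u$ strongly in $L^p(\R^N;|g|\,dx)$, so $\int_{\R^N}|g||u|^p=1$ (the constraint survives, in particular $u\neq0$), and $u_n\to u$ a.e.\ along a further subsequence.

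The delicate point, which I expect to be the main obstacle, is the convergence $L(u_n)\to L(u)$ of the sign-indefinite and unbounded logarithmic functional; I would handle it by an equi-integrability argument exploiting the gap $r>p$. For $|s|\ge1$ one has $|s|^p\log|s|^p\le\frac1\epsilon|s|^{p+\epsilon}$, while for $|s|\le1$ the map $s\mapsto|s|^p\log|s|^p$ is bounded. Since the weighted Hardy--Sobolev inequality at exponent $r$ together with the bound on $t_n$ gives a uniform bound on $\int_{\R^N}|g||u_n|^r$, choosing $\epsilon$ with $p+\epsilon<r$ makes the family $\{|g|\,|u_n|^p\log|u_n|^p\}$ uniformly integrable (ruling out both concentration and escape of mass). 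Combined with a.e.\ convergence, Vitali's theorem then yields $L(u_n)\to L(u)$. Finally, weak lower semicontinuity of the Dirichlet integral and this convergence give $Q(u)\le\liminf_n Q(u_n)=1/\text{C}_B(g,\gamma)$; as $u$ is admissible, $Q(u)\ge 1/\text{C}_B(g,\gamma)$, and therefore $u$ attains $\text{C}_B(g,\gamma)$ in $\spr$.
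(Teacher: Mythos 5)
You set up the minimizing sequence, prove its boundedness via the self-improving bound with exponent $\theta=\frac{r}{\gamma(r-p)}<1$, and obtain the surviving constraint $\int_{\R^N}|g||u|^p=1$ from the compactness of $u\mapsto\int_{\R^N}|g||u|^p$ supplied by $g\in\mathcal{F}_{p,p}(\R^N)$ --- all of this matches the paper (your asserted compactness is exactly Lemma \ref{cpct}, which the paper proves via the associate-space embedding of Proposition \ref{associate}). The genuine gap is in your last step: the claim that $\{|g|\,|u_n|^p\log|u_n|^p\}$ is uniformly integrable and tight, so that Vitali gives $L(u_n)\to L(u)$. Your domination only works on $\{|u_n|\ge 1\}$: there $|u_n|^p\log|u_n|^p\le \frac{C}{\epsilon}|u_n|^{p+\epsilon}$ with $p+\epsilon<r$, and interpolating between $|g||u_n|^p$ (which converges in $L^1$ by Lemma \ref{cpct} --- note the $r$-bound alone is not enough, since $|g|\,dx$ is not a finite measure) and the bounded quantities $\int_{\R^N}|g||u_n|^r$ does give uniform integrability and tightness of that part. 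But on $\{|u_n|<1\}$, where the integrand is negative, your only bound is $|g|\,|u_n|^p\,|\log|u_n|^p|\le C|g|$, and this gives neither tightness nor uniform integrability: the hypotheses only give $g\in L^1_{loc}(\R^N)$, and a function in $\mathcal{H}_{p,r}(\R^N)\cap\mathcal{F}_{p,p}(\R^N)$ need not belong to $L^1(\R^N)$ (take $g=\sum_k c_k\chi_{B_{r_k}(x_k)}$ on disjoint balls escaping to infinity with $c_kr_k^p=2^{-k}$, so the capacitary norms are summable, but $c_kr_k^N=1$, so $\int g=\infty$). Nor can you interpolate on this set, since controlling $|u_n|^p|\log|u_n|^p|$ there requires an exponent strictly below $p$, and no quantity $\int_{\R^N}|g||u_n|^q$ with $q<p$ is controlled by the hypotheses. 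Even strong convergence of $|g||u_n|^p$ in $L^1$ does not tame the logarithmic factor where $u_n$ is small: on far-away bumps of $g$ one can have $|u_n|^p\approx\epsilon_k$ with $\sum_k\epsilon_k$ small but $\sum_k\epsilon_k|\log\epsilon_k|$ large. So ``ruling out escape of mass'', which you claim parenthetically, is precisely what your argument does not establish.

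This is the difficulty the paper's proof is designed to avoid: it never dominates the negative part at all. It applies the Brezis--Lieb lemma (Lemma \ref{BLlemma} with $J(t)=t^p\log t$, Example \ref{Functional}(B)) to split $\lim_n L(u_n)=L(u)+\ell$, where $\ell=\lim_n L(u_n-u)$, and then proves $\ell=0$ by two one-sided arguments of different natures. The inequality $\ell\le 0$ follows from the interpolation estimate \eqref{41} applied to $u_n-u$: with $A_n=\int_{\R^N}|g||u_n-u|^p\to 0$ (Lemma \ref{cpct}) and $\int_{\R^N}|g||u_n-u|^r$ bounded, both $A_n\log A_n$ and $A_n\log\big(\int_{\R^N}|g||u_n-u|^r\big)$ tend to $0$. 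The inequality $\ell\ge 0$ comes not from integrability but from optimality: substituting the splitting into \eqref{jj} and using admissibility of $u$ gives \eqref{j}, i.e.\ $\frac{1}{\text{C}_B(g,\gamma)}\ge\frac{1}{\text{C}_B(g,\gamma)}\,e^{-\ell/\gamma}$, which is impossible if $\ell<0$. Thus the troublesome negative region is neutralized by the variational structure rather than by domination. If you want to keep your Vitali route, you must either assume $g\in L^1(\R^N)$ (the setting of Proposition \ref{functionspace}(ii), where your argument does close) or replace the low-part domination by an argument of the paper's type.
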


Next, we recall the classical Hardy inequality \cite{Opic,Jesper}
\begin{equation} \label{CHINEQ}
\int_{\R^N} \frac{|u|^p}{|x|^p} \ dx \leq \left(\frac{p}{N-p}\right)^p \int_{\R^N} |\nabla u|^p \ dx \,, \forall u \in C_c^{\infty}(\R^N)\,.
\end{equation}
Many improvements and generalizations of Hardy inequality are available in literature, for instance see \cite{Visciglia,Filippas,Vazquez,Adimurthi}. We refer to \cite{Kufner} for a comprehensive review on this topic. A variant of \eqref{CHINEQ} involving the distance from the boundary of the domain instead of a distance to a point singularity has been studied extensively, see \cite{Alvino,Dupaigne,Tintarev,Junfang,Gkikas}. If $\Om$ is open, convex with a smooth boundary $\partial \Om$, then the following variant of Hardy inequality \cite{Sobolevskii}
\begin{equation} \label{distthm} 
\int_{\Om} \frac{|u|^p}{|\delta_{\partial \Om}(x)|^p} \ dx \leq \left(\frac{p}{p-1}\right)^p \int_{\Om} |\nabla u|^p \ dx \,,  \forall u \in C_c^{\infty}(\Om) \,,
\end{equation}
holds for $p \in (1,\infty)$, where $\delta_A(x)$ denotes the distance of $x$ from a closed set $A$. Later, the convexity assumptions are relaxed in \cite{Barbatis,Lewis}. Further, recently in \cite{Juha,Dyda} authors have considered the distance from a general closed set $E$ in $\R^N$ instead of $\partial \Om$.
Under certain conditions on the `Assouad dimension' of $E$ (see Section \ref{label} for the precise definition), they obtained the following global Hardy inequality:
\begin{equation} \label{DISTHINEQ}
\int_{\R^N} \frac{|u|^p}{|\delta_E(x)|^p} \ dx \leq C \int_{\R^N} |\nabla u|^p \ dx, \ \ \forall u \in C_c^{\infty}(\R^N) \,,
\end{equation}
for $1<p<\infty$. Notice that, for $E=\{0\}$ and  $E=\partial \Om  $, \eqref{DISTHINEQ} corresponds to \eqref{CHINEQ} and \eqref{distthm} respectively. In \cite{Pino} authors obtained the following {\it{logarithmic Hardy inequality}}:
\begin{equation} \label{INTERLSHINEQ}
  \int_{\R^N} \frac{|u|^2}{|x|^2} \log \left(|x|^{N-2} |u|^2 \right) \ dx \leq \frac{N}{2} \log \left(C \int_{\R^N} |\nabla u|^2 \ dx \right) \,, 
\end{equation}
for all $u \in C_c^{\infty}(\R^N)$ with $\displaystyle \int_{\R^N} \frac{|u|^2}{|x|^2}=1$. Notice that, the integrals in \eqref{INTERLSHINEQ} are scale invariant which distinguishes them from logarithmic Sobolev inequalities \eqref{LSINEQ}. 
The Caffarelli–Kohn–Nirenberg type inequality \cite{CKN}
played a key role in \cite{Pino} to achieve \eqref{INTERLSHINEQ}. In a recent work \cite{Dyda} authors proved a variant of Caffarelli–Kohn–Nirenberg type inequalities involving $\delta_E$ under certain restriction on Assouad dimension of $E$. This facilitates us to achieve the logarithmic Hardy inequality involving $\delta_E$ as stated in the following theorem. 
\begin{theorem} \label{genthm}
Let $N \geq 3$, $p \in (1,N)$ and $E$ be a closed set in $\R^N$ with ${\text{Assouad dimension}}$ $=d<N$. Then, for $a \in (-\frac{(N-d)(p-1)}{p},\frac{(N-p)(N-d)}{Np}),$ there exists $\text{C}>0$ such that 
\begin{equation} \label{GINTERDLSHINEQ}
\int_{\R^N} \frac{|u|^p}{|\delta_E|^{p(a+1)}} \log \left(\delta_E^{N-p-pa} |u|^p\right) \ dx \leq \frac{N}{p} \log \left(\text{C} \int_{\R^N} \frac{|\nabla u|^p}{|\delta_E^{pa}|} \ dx \right) \,,
\end{equation}
for all $u \in C_c^{\infty}(\R^N)$ with $\displaystyle \int_{\R^N} \frac{|u|^p}{|\delta_E|^{p(a+1)}} =1.$
\end{theorem}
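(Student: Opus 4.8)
The plan is to deduce \eqref{GINTERDLSHINEQ} from a single \emph{critical} Caffarelli--Kohn--Nirenberg inequality with the distance weight $\delta_E$ (the one established in \cite{Dyda}) together with one application of Jensen's inequality; no further use of the geometry of $E$ is then needed. The precise input I would isolate from \cite{Dyda} is the scale-invariant weighted Hardy--Sobolev inequality
\begin{equation}\label{critCKN}
\left(\int_{\R^N}\delta_E^{-ap^*}|u|^{p^*}\,dx\right)^{\frac1{p^*}}\le C\left(\int_{\R^N}\delta_E^{-pa}|\nabla u|^p\,dx\right)^{\frac1p},\qquad u\in C_c^\infty(\R^N),
\end{equation}
valid precisely for $a\in\big(-\tfrac{(N-d)(p-1)}{p},\tfrac{(N-p)(N-d)}{Np}\big)$. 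A short dimensional-homogeneity check singles out $\big(\beta,\alpha,q\big)=\big(ap^*,pa,p^*\big)$ as the unique exponents making the weight pair $(\delta_E^{-\beta},\delta_E^{-\alpha})$ admissible at the Sobolev exponent $q=p^*$; this is why the critical exponent, rather than some $d$-dependent one, appears, and why the final constant will carry the factor $\tfrac{N}{p}$.

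Next I would pass to a probabilistic reformulation. Setting $w=\delta_E^{-p(a+1)}|u|^p\ge0$, the normalization $\int_{\R^N}\delta_E^{-p(a+1)}|u|^p\,dx=1$ says exactly that $d\mu:=w\,dx$ is a probability measure on $\R^N$. Writing $\rho:=\delta_E^{N-p-pa}|u|^p$ for the expression inside the logarithm and $t:=\tfrac{p^*}{p}=\tfrac{N}{N-p}$, a direct exponent computation (using $t-1=\tfrac{p}{N-p}$) gives the identity
\begin{equation}\label{keyid}
\int_{\R^N}\delta_E^{-ap^*}|u|^{p^*}\,dx=\int_{\R^N} w\,\rho^{\,t-1}\,dx=\int_{\R^N}\rho^{\,t-1}\,d\mu .
\end{equation}
In the same notation, the left-hand side of \eqref{GINTERDLSHINEQ} is precisely $\int_{\R^N}\log\rho\,d\mu$.

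With \eqref{critCKN} and \eqref{keyid} in hand, the logarithm is produced by Jensen's inequality applied to the concave function $\log$ and the probability measure $\mu$: since $t-1>0$,
\begin{equation*}
(t-1)\int_{\R^N}\log\rho\,d\mu=\int_{\R^N}\log\rho^{\,t-1}\,d\mu\le\log\int_{\R^N}\rho^{\,t-1}\,d\mu\le\log\left(C^{p^*}\Big(\int_{\R^N}\delta_E^{-pa}|\nabla u|^p\,dx\Big)^{t}\right),
\end{equation*}
where the last step is \eqref{critCKN} raised to the power $p^*$ combined with \eqref{keyid}. Dividing by $t-1$ and using $\tfrac{p^*}{t-1}=N$ and $\tfrac{t}{t-1}=\tfrac{N}{p}$ collapses the right-hand side to $\tfrac{N}{p}\log\big(C^{p}\int_{\R^N}\delta_E^{-pa}|\nabla u|^p\,dx\big)$, which is \eqref{GINTERDLSHINEQ} with $\mathrm C=C^{p}$.

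The entire analytic content sits in \eqref{critCKN}; everything after it is bookkeeping with exponents and one convexity inequality. Accordingly, the main obstacle I anticipate is twofold: first, extracting from \cite{Dyda} the inequality in exactly the critical, scale-invariant form \eqref{critCKN} (their CKN statement is likely phrased for a family of exponents, so I must specialize it and track the constant $C$), and second, verifying that the admissibility range for $a$ coming from the Assouad-dimension hypothesis on $E$ coincides with the interval $\big(-\tfrac{(N-d)(p-1)}{p},\tfrac{(N-p)(N-d)}{Np}\big)$ in the statement. A minor technical point is the legitimacy of the Jensen step when $\int_{\R^N}\log\rho\,d\mu=-\infty$; this is harmless, since we only ever need the displayed upper bound.
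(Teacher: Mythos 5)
Your proof is correct, and it rests on exactly the same analytic input as the paper's: Lemma \ref{Dydalemma} (Remark 6.2 of \cite{Dyda}) specialized to $q=p^*$, $\beta=-pa$, whose two admissibility conditions $d<\tfrac{p^*}{p}(N-p-pa)$ and $d<N+\tfrac{pa}{p-1}$ unwind precisely to $a\in\big(-\tfrac{(N-d)(p-1)}{p},\tfrac{(N-p)(N-d)}{Np}\big)$; your identification of the critical weight is also the paper's, since $N-\tfrac{p^*}{p}(N-p-pa)=ap^*$, so your scale-invariant CKN inequality is literally the bound the paper applies to its quantity $\text{B}_1$. Where you genuinely differ is the mechanism that produces the logarithm. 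The paper runs a H\"older interpolation between the exponents $p$ and $p^*$ for the whole family $q=p+t$, notes that equality holds at $t=0$, forms the difference quotient in $t$, and passes to the limit using Fatou's lemma, the logarithm arising as the derivative of $t\mapsto \text{A}_1^{1-\theta(t)}\text{B}_1^{\theta(t)}$ at $t=0$. You instead observe that the normalization makes $d\mu=\delta_E^{-p(a+1)}|u|^p\,dx$ a probability measure and apply Jensen's inequality for the concave function $\log$ to $\rho^{t-1}$ with $t=p^*/p$, via the (correct) exponent identity $-p(a+1)+(N-p-pa)\tfrac{p}{N-p}=-ap^*$. These are two packagings of the same convexity fact --- the paper's differentiation-of-H\"older argument is in substance a proof of the Jensen step --- but yours is shorter, avoids the limiting/Fatou argument entirely, and deals transparently with the degenerate case $\int\log\rho\,d\mu=-\infty$ (the positive part of $\log\rho$ is $\mu$-integrable once $\int\rho^{t-1}\,d\mu<\infty$, so the upper bound is meaningful); your bookkeeping $\tfrac{p^*}{t-1}=N$, $\tfrac{t}{t-1}=\tfrac{N}{p}$, $\mathrm{C}=C^p$ is right. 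What the paper's route buys is an intermediate, normalization-free inequality with the extra $\text{A}_1\log \text{A}_1$ term (the analogue of \eqref{41}), a form it reuses elsewhere (e.g., in the proof of Theorem \ref{Bestthm}), though the same statement can be recovered from your argument by applying Jensen to the normalized measure; conversely, your method transplants verbatim to Theorems \ref{weightthm}, \ref{higherthm} and \ref{LoSOthm}, so nothing is lost.
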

\noi  Notice that, \eqref{INTERLSHINEQ} follows from \eqref{GINTERDLSHINEQ} by taking $E=\{0\}$, $p=2$ and $a=0$.
We also have the following extension of \eqref{GINTERDLSHINEQ}.
\begin{theorem} \label{higherthm}
Let $N \geq 3$, $p \in (1,\frac{N}{2})$ and $E$ be a closed set in $\R^N$ with ${\text{Assouad dimension}}$ $=d<\frac{N(N-2p)}{(N-p)}$. Then, for each $a \in (1-\frac{(N-d)(p-1)}{p},\frac{(N-p)(N-d)}{Np})$ there exists $\text{C}>0$ such that 
\begin{equation} \label{HIGHERINTERDLSHINEQ}
\int_{\R^N} \frac{|u|^p}{|\delta_E|^{p(a+1)}} \log \left(\delta_E^{N-p-pa} |u|^p\right) \ dx \leq \frac{N}{p} \log \left(\text{C} \int_{\R^N} \frac{|\nabla^2 u|^p}{|\delta_E^{(a-1)p}|} \ dx \right) \,,
\end{equation}
for all $u \in C_c^{\infty}(\R^N)$ with $\displaystyle \int_{\R^N} \frac{|u|^p}{|\delta_E|^{p(a+1)}} =1,$ where $|\nabla^2 u|^2=\sum_{i,j=1}^N (\frac{\partial^2 u}{\partial x_i \partial x_j})^2$.
\end{theorem}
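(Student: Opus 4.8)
The plan is to deduce \eqref{HIGHERINTERDLSHINEQ} from the first-order logarithmic Hardy inequality \eqref{GINTERDLSHINEQ} of Theorem \ref{genthm} by trading one order of differentiation for a shift in the weight exponent. The structural clue is that the coefficient $\frac{N}{p}$ and the upper endpoint $\frac{(N-p)(N-d)}{Np}$ of the admissible range for $a$ are \emph{identical} to those of Theorem \ref{genthm}, while the lower endpoint is raised by exactly $1$ (from $-\frac{(N-d)(p-1)}{p}$ to $1-\frac{(N-d)(p-1)}{p}$). This is precisely the signature of first controlling the first-order weighted gradient integral by the second-order one, and then applying \eqref{GINTERDLSHINEQ} unchanged.

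First I would establish the intermediate weighted Hardy (Rellich-type) inequality
\begin{equation*}
\int_{\R^N}\frac{|\nabla u|^p}{\delta_E^{pa}}\,dx \ \le\ \text{C}_1\int_{\R^N}\frac{|\nabla^2 u|^p}{\delta_E^{(a-1)p}}\,dx, \qquad u \in C_c^\infty(\R^N).
\end{equation*}
This follows by applying the scalar distance-weighted Hardy inequality --- the endpoint $q=p$ of the Caffarelli--Kohn--Nirenberg family of \cite{Dyda} that already underlies Theorem \ref{genthm} --- to each partial derivative $v=\partial_i u\in C_c^\infty(\R^N)$, now with weight exponent $a-1$ in place of $a$, and then summing over $i$ via the finite-dimensional norm equivalences $\big(\sum_i|\partial_i u|^2\big)^{p/2}\asymp\sum_i|\partial_i u|^p$ and $\sum_i\big(\sum_j|\partial_{ij}u|^2\big)^{p/2}\asymp\big(\sum_{i,j}|\partial_{ij}u|^2\big)^{p/2}$ (these hold pointwise up to constants depending only on $N$ and $p$). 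The admissibility of this step is exactly the requirement that $a-1$ lie in the validity range of the first-order weighted Hardy inequality, which is what produces the lower bound $a>1-\frac{(N-d)(p-1)}{p}$ in the theorem.

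With this in hand I would invoke Theorem \ref{genthm} with the same parameter $a$. Since $a>1-\frac{(N-d)(p-1)}{p}>-\frac{(N-d)(p-1)}{p}$ and $a<\frac{(N-p)(N-d)}{Np}$, the hypotheses of \eqref{GINTERDLSHINEQ} are met, so for every $u\in C_c^\infty(\R^N)$ normalized by $\int_{\R^N}\delta_E^{-p(a+1)}|u|^p=1$ we get
\begin{equation*}
\int_{\R^N}\frac{|u|^p}{\delta_E^{p(a+1)}}\log\!\left(\delta_E^{N-p-pa}|u|^p\right)dx \ \le\ \frac{N}{p}\log\!\left(\text{C}_2\int_{\R^N}\frac{|\nabla u|^p}{\delta_E^{pa}}\,dx\right).
\end{equation*}
Combining the two displays and using the monotonicity of $\log$ to absorb $\text{C}_1$ into the argument then yields \eqref{HIGHERINTERDLSHINEQ} with $\text{C}=\text{C}_1\text{C}_2$, and the normalization constraint is unchanged throughout since it involves only $u$ and $\delta_E$.

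I expect the main obstacle to be the intermediate Rellich-type inequality and, more precisely, the bookkeeping of its range of validity. One must verify that shifting the gradient weight from $\delta_E^{-pa}$ to $\delta_E^{-(a-1)p}$ keeps us inside the regime where the distance-weighted first-order Hardy inequality of \cite{Dyda} actually holds (local integrability of the shifted weight together with the capacitary/Assouad-dimension hypotheses), and it is this compatibility between the two weight scales --- rather than mere nonemptiness of the $a$-interval --- that forces the dimensional restriction $d<\frac{N(N-2p)}{N-p}$ and the standing requirement $p<\frac{N}{2}$. By contrast, the summation over components and the final combination via $\log$-monotonicity are routine once the weighted Hardy step is secured.
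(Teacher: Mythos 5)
Your proposal is correct and is essentially the paper's own proof: the paper likewise obtains the first-order inequality \eqref{GINTERDLSHINEQ} at the same parameter $a$ and then applies Lemma \ref{Dydalemma} with $q=p$, $\beta=(1-a)p$ (in effect componentwise on $\nabla u$, as you spell out) to get $\int_{\R^N}\delta_E^{-pa}|\nabla u|^p\,dx\le C\int_{\R^N}\delta_E^{-(a-1)p}|\nabla^2 u|^p\,dx$, with the lower bound $a>1-\frac{(N-d)(p-1)}{p}$ arising from exactly the condition you identify. Only your closing speculation is slightly off: the argument itself never needs $d<\frac{N(N-2p)}{N-p}$ beyond nonemptiness of the $a$-interval (for which $d<\frac{N(N-2)}{N-1}$ would suffice); the stronger hypothesis is precisely what makes $a=1$ admissible, which the paper exploits in Remark \ref{replap}.
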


So far we have discussed about logarithmic version of Sobolev inequality and Hardy inequalities. In the same spirit, we establish a logarithmic version of Lorentz-Sobolev inequality as stated below. 
\begin{theorem} \label{LoSOthm}
Let $N \geq 3$ and $p \in (1,N)$. Then, for all $u \in \mathcal{D}^{1,p}_0(\R^N)$ with $\|u\|_{L^{p^*,p}}=1$, there exists $\text{C}>0$ such that $$\int_0^{\infty} s^{\frac{p}{p^*}-1} |u|^p \log (s^{1-\frac{p}{N}} |u|^p ) \ ds \leq \frac{N}{p} \log \left(\text{C} \int_{\R^N} |\nabla u|^p \ dx \right),.$$
\end{theorem}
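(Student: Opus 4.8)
The plan is to transplant the classical derivation of the logarithmic Sobolev inequality \eqref{LSINEQ} from the Sobolev inequality \eqref{INTERHSINEQ} onto the rearranged (one–variable) side, where the weight $s^{p/p^*-1}$ lives. As in \eqref{lorentzineq} I read $|u|$ inside the $\diff s$–integral as the decreasing rearrangement $u^*(s)$. The decisive preliminary remark is the identity
$$\frac{p}{p^*}=\frac{p(N-p)}{Np}=\frac{N-p}{N}=1-\frac{p}{N},$$
so the exponent $1-\frac pN$ under the logarithm is exactly $\frac p{p^*}$ and the argument of the logarithm is the single quantity $g(s):=s^{p/p^*}u^*(s)^p$. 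Under the normalization $\|u\|_{L^{p^*,p}}=1$ the measure $\diff\nu(s):=s^{\frac{p}{p^*}-1}u^*(s)^p\,\diff s$ has total mass $\|u\|_{L^{p^*,p}}^p=1$, hence is a probability measure on $(0,\infty)$, and the left-hand side of the claimed inequality is precisely $\int_0^\infty \log g\,\diff\nu$.

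The core step is Jensen's inequality for the concave function $\log$ against $\nu$: for any $\theta>0$,
$$\int_0^\infty\log g\,\diff\nu=\frac1\theta\int_0^\infty\log\bigl(g^\theta\bigr)\,\diff\nu\le\frac1\theta\log\!\left(\int_0^\infty g^\theta\,\diff\nu\right).$$
I then compute, tracking only exponents, $\int_0^\infty g^\theta\,\diff\nu=\int_0^\infty s^{\frac{(\theta+1)p}{p^*}-1}u^*(s)^{(\theta+1)p}\,\diff s=\|u\|_{L^{p^*,(\theta+1)p}}^{(\theta+1)p}$, and make the distinguished choice $\theta=\frac p{N-p}$, for which $(\theta+1)p=p^*$. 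With this choice the second Lorentz index collapses, $\|u\|_{L^{p^*,p^*}}=\|u\|_{L^{p^*}}$, and the prefactor becomes $\frac{(\theta+1)p}{\theta}=N$. Therefore
$$\int_0^\infty\log g\,\diff\nu\le N\log\|u\|_{L^{p^*}}=\frac{N}{p}\log\|u\|_{L^{p^*}}^p.$$
To finish I insert the classical Sobolev inequality \eqref{INTERHSINEQ} in the form $\|u\|_{L^{p^*}}^p\le\mathrm C^p\int_{\R^N}|\nabla u|^p\,\diff x$; substituting into the last display gives the asserted inequality with constant $\mathrm C^p$.

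I do not anticipate a genuine obstacle: once the identity $1-\frac pN=\frac p{p^*}$ is observed, the argument is short. The points demanding care are the exponent bookkeeping — confirming that the single value $\theta=\frac p{N-p}$ simultaneously collapses the Lorentz index to $p^*$ and yields the factor $\frac Np$ — and the legitimacy of Jensen's inequality, which needs $g\in L^1(\diff\nu)$, equivalently $\|u\|_{L^{p^*}}<\infty$; this holds for every $u\in\mathcal D^{1,p}_0(\R^N)$ by \eqref{INTERHSINEQ}, and it guarantees that $\int_0^\infty\log g\,\diff\nu$ is well defined in $[-\infty,\infty)$ with a finite upper bound. Finally, I remark that keeping $\theta$ arbitrary and instead estimating $\|u\|_{L^{p^*,(\theta+1)p}}\le C\|u\|_{L^{p^*,p}}$ by the Lorentz embedding before closing with the Lorentz–Sobolev inequality \eqref{lorentzineq} also works, and in fact produces the smaller coefficient $\frac{\theta+1}{\theta}$; the value $\theta=\frac p{N-p}$ is singled out only because it reproduces, through the plain Sobolev inequality, the coefficient $\frac Np$ familiar from \eqref{LSINEQ}.
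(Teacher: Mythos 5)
Your proof is correct, but it takes a cleaner route than the paper's. The paper recycles the template of its Theorem \ref{weightthm}: for $q=p+t$ with $k=p\frac{p^*-q}{p^*-p}$ it writes a H\"older interpolation inequality between $\|u\|_{L^{p^*,p}}$ and $\|u\|_{L^{p^*}}$, observes that equality holds at $t=0$, divides by $t$ and lets $t\to 0$ using Fatou's lemma; this differentiation at the endpoint yields precisely your intermediate bound $\int_0^\infty s^{\frac{p}{p^*}-1}(u^*)^p\log\bigl(s^{1-\frac{p}{N}}(u^*)^p\bigr)\,ds\le N\log\|u\|_{L^{p^*}}$ under the normalization, after which the classical Sobolev inequality \eqref{INTERHSINEQ} closes the argument. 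You reach the same intermediate bound in a single application of Jensen's inequality for the probability measure $d\nu=s^{\frac{p}{p^*}-1}(u^*)^p\,ds$, with the exponent $\theta=\frac{p}{N-p}$ chosen so that $\int_0^\infty g^\theta\,d\nu$ collapses to $\|u\|_{L^{p^*}}^{p^*}$; your final step via \eqref{INTERHSINEQ} is identical to the paper's implicit one. What your route buys: no limiting process, no Fatou, and the well-definedness of the left-hand side (its positive part is integrable because $\|u\|_{L^{p^*}}<\infty$) is handled explicitly, a point the paper glosses over. What the paper's route buys is uniformity: the same H\"older-differentiation scheme is reused verbatim for Theorems \ref{weightthm}, \ref{genthm} and \ref{higherthm}, and it produces the correct exponent automatically rather than by inspection; of course the two mechanisms are two faces of the same convexity fact. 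One small slip in your closing aside: with $\theta$ kept free, the coefficient produced is $\frac{(\theta+1)p}{\theta}$, not $\frac{\theta+1}{\theta}$, and recasting the resulting constant bound in the stated form $\gamma\log\bigl(C\int_{\R^N}|\nabla u|^p\,dx\bigr)$ requires noting that \eqref{lorentzineq} bounds $\int_{\R^N}|\nabla u|^p\,dx$ below under the normalization; neither issue affects your main argument.
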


The rest of this article is organised as follows. In Section \ref{prelim}, we recall the notion of Assouad dimension of a set in $\R^N$ and we briefly discuss the Banach function spaces $\mathcal{H}_{p,r}(\R^N)$, $\mathcal{F}_{p,r}(\R^N)$ and some known results which are essential for this article. We prove Theorem \ref{weightthm}, Theorem \ref{Bestthm} in Section \ref{3}. Section \ref{proofs} is devoted to prove Theorem \ref{genthm}, Theorem \ref{higherthm} and some important remarks on these theorems. Further, we prove the Theorem \ref{LoSOthm} in Section \ref{4}.

\section{Preliminaries} \label{prelim}
In this section, we recall the notion of Assouad dimension of a set in $\R^N$ and briefly discuss some of its properties. Further, we state some of the known results which are essential for this manuscript.
\subsection{Assouad dimension} \label{label}
For a general subset $E$ of $\R^N$, $\eta (E,r)$ denotes the minimal number
of open balls of radius $r$ with centers in $E$ that are needed to cover the set $E$. The Assouad dimension is denoted by $\text{dim}_A(E)$ and is defined as follows.
\begin{eqnarray*}
 \text{dim}_A(E)= \inf \Bigg \{\la \geq 0: \mbox{there exists} \ C_{\la} > 0 \ \text{so that} 
 \ \eta(E \cap B_R(x),r) \leq  C_{\la} \left(\frac{r}{R} \right)^{-\la}, \\
 \forall x \in E, 0<r<R< \text{diam}(E)  \displaystyle \Bigg \} \,.
\end{eqnarray*}
In the case when $\text{diam}(E)=0$, we remove the restriction $R<\text{diam}(E)$ from above definition. By this convention one can see that, if $E=\{x_0\}$ for some $x_0 \in \R^N$ then $\text{dim}_A(E)=0.$
We refer to \cite{Assouad} for a historical background of Assouad dimension and its basic properties. More recent results on this can be found in \cite{Fraser}. Here, we enlist some of its basic properties in the following proposition, for a proof see \cite{Assouad}.

\begin{proposition} \rm Let $(X,d)$ be a metric space. Then the following statements are true:
\begin{enumerate}
 \item[(i)] $X$ has finite Assouad dimension if and only if it is a doubling space, i.e. there exists a finite constant $C>0$ such that every ball of radius $r$ can be covered by no more than $C$ balls of radius $\frac{r}{2}$.
 \item[(ii)] If $Y \subset X$, then $\text{dim}_A(Y) \leq \text{dim}_A(X)$. Equality holds if $\overline{Y}=X.$
 \item[(iii)] $\text{dim}_H(X) \leq \text{dim}_A(X),$ where  $\text{dim}_H$ denotes the Hausdroff dimension of $X.$
 \item[(iv)] Let $E \subset \R^N$. Then $\text{dim}_A(E) <N$ if and only if
$E$ is porous in $\R^N$ i.e. there
is a constant $\al (0,1)$ such that for every $x \in E$  and all $0<r<\text{diam}(E)$ there exists a
point $y \in \R^N$ such that $B_{\al r}(y) \subset B_r(x) \setminus E.$
 \end{enumerate}
 \end{proposition}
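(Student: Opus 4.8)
The plan is to treat the four assertions in increasing order of difficulty, reducing each to an elementary counting estimate on the covering numbers $\eta(\cdot,r)$. A preliminary normalization I would record first is that requiring the ball centers to lie in the set, as in the definition of $\eta$, is immaterial for the value of $\text{dim}_A$: if $\{B_r(z_i)\}$ covers $E\cap B_R(x)$ with arbitrary centers $z_i$, then discarding empty balls and replacing each $z_i$ by a point $y_i\in B_r(z_i)\cap E$ gives a cover of $E\cap B_R(x)$ by the balls $B_{2r}(y_i)$, now centered in $E$. Hence the two covering numbers differ only by a bounded change of scale ($r\mapsto 2r$), which leaves the exponent $\la$ unaffected, and I may freely use free-centered covers below.

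For (i) I would argue by iteration. If $\text{dim}_A(X)=\la<\infty$, then fixing $\la'>\la$ and applying the defining estimate with big radius $r$ and small radius $r/2$ gives $\eta(X\cap B_r(x),r/2)\le C_{\la'}2^{\la'}$, a bound independent of $r$; this is exactly the doubling property. Conversely, if $X$ is doubling with constant $C$, iterating shows a ball of radius $R$ is covered by $C^k$ balls of radius $R/2^k$; choosing $k$ with $R/2^k\le r<R/2^{k-1}$ yields $\eta(X\cap B_R(x),r)\le C^k\le C\,(R/r)^{\log_2 C}$, so $\text{dim}_A(X)\le\log_2 C<\infty$.

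Assertion (ii) is then immediate from the free-centered description: for $Y\subset X$ and $x\in Y\subset X$ one has $Y\cap B_R(x)\subset X\cap B_R(x)$, so any cover of the larger set covers the smaller, giving $\eta(Y\cap B_R(x),r)\le\eta(X\cap B_R(x),r)$ and hence $\text{dim}_A(Y)\le\text{dim}_A(X)$. For the closure statement I would check $\text{dim}_A(\overline{Y})=\text{dim}_A(Y)$ by comparing $\eta(Y,r)\le\eta(\overline Y,r)$ (monotonicity) with $\eta(\overline Y,2r)\le\eta(Y,r)$ (enlarging each covering ball slightly absorbs limit points, since a point of $\overline Y$ lies within $r$ of $Y$); the resulting scale change again preserves the exponent, and combined with (ii) this forces equality when $\overline Y=X$. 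For (iii) I would estimate Hausdorff measure directly: for $s>\la\ge\text{dim}_A(X)$ and $X$ bounded of diameter $D$, cover $X$ by $\eta(X,r)\le C(r/D)^{-\la}$ balls of radius $r$ to get $\mathcal H^s_{2r}(X)\le C(r/D)^{-\la}(2r)^s\lesssim r^{s-\la}\to0$ as $r\to0$, whence $\mathcal H^s(X)=0$ and $\text{dim}_H(X)\le s$; letting $s\downarrow\la\downarrow\text{dim}_A(X)$ and splitting an unbounded $X$ into countably many bounded pieces finishes it.

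The main obstacle is (iv), the porosity characterization, which is genuinely two-directional and metric rather than purely combinatorial. For the direction ``not porous $\Rightarrow\text{dim}_A=N$'' I would argue contrapositively: if $E$ admits no hole of relative size $\al$ inside some $B_r(x)$, then $E$ is $\al r$-dense in a fixed fraction of $B_r(x)$, so covering $E\cap B_r(x)$ by $\al r$-balls requires at least a constant multiple of $\al^{-N}$ balls; since $\al$ can be taken arbitrarily small when porosity fails, $\eta(E\cap B_r(x),\al r)$ cannot be dominated by $C_\la\al^{-\la}$ for any $\la<N$, forcing $\text{dim}_A(E)=N$. For the converse I would exploit that porosity is self-improving across scales: each ball $B_r(x)$ centered on $E$ contains a definite hole $B_{\al r}(y)$ free of $E$, so upon subdividing $B_r(x)$ into $\approx(1/\al)^N$ subballs a fixed positive proportion meet the hole and may be discarded; iterating over $k$ generations shows $E\cap B_R(x)$ is covered by at most $\big((1-c\al^N)\al^{-N}\big)^k$ balls of radius $\al^k R$, yielding $\text{dim}_A(E)\le N-\frac{\log(1-c\al^N)^{-1}}{\log(1/\al)}<N$. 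The delicate points I would have to handle carefully are the bookkeeping of overlaps in the subdivision and the uniformity of the hole across all scales and base points, both of which are guaranteed by the single porosity constant $\al$.
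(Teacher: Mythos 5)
First, a point of comparison: the paper does not actually prove this proposition — it cites \cite{Assouad} and moves on — so your argument has to stand entirely on its own. Your free-center normalization and your proofs of (i), (ii), (iii) are the standard ones and are essentially correct (one small technicality: in (iii) you apply the covering estimate at $R=\mathrm{diam}(X)$, which the paper's definition excludes via the strict inequality $R<\mathrm{diam}(E)$; this is repaired by writing $X=\bigcup_n X\cap B_{R_n}(x)$ with $R_n\uparrow\mathrm{diam}(X)$ and using countable stability of Hausdorff dimension). The first half of (iv) is also sound: if porosity fails at parameter $\alpha$ inside some $B_r(x)$, then $E$ is $\alpha r$-dense in $B_{(1-\alpha)r}(x)$, and your volume comparison does give $\eta(E\cap B_r(x),\alpha r)\geq c_N\alpha^{-N}$, which is incompatible with $\text{dim}_A(E)<N$ once $\alpha\downarrow 0$.

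The genuine gap is in the converse half of (iv), porous $\Rightarrow\text{dim}_A(E)<N$. The per-generation count $\big((1-c\alpha^N)\alpha^{-N}\big)^k$ you claim is unobtainable if you subdivide balls into balls. Balls do not tile: any cover of a ball of radius $\rho$ by balls of radius $\alpha\rho$ (even before re-centering them on $E$, which costs a further dilation) requires at least $C_N\alpha^{-N}$ balls with $C_N>1$ bounded away from $1$. So the true per-generation multiplier is at best $C_N\alpha^{-N}(1-c\alpha^N)$, and the exponent it yields is $N+\frac{\log C_N+\log(1-c\alpha^N)}{\log(1/\alpha)}>N$, because the loss $\log C_N$ is of order $1$ while the gain $-\log(1-c\alpha^N)$ is of order $\alpha^N$ — exponentially smaller. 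The ``bookkeeping of overlaps'' you defer to is precisely where the argument dies; no amount of care recovers it for ball subdivisions. The known repair is to iterate over nested dyadic cubes, which tile exactly and hence contribute no per-generation constant; but then a second problem appears that your sketch also does not address: porosity applied at $x\in E\cap Q$ produces a hole inside $B_r(x)$, not inside $Q$, and if $x$ lies near $\partial Q$ the hole may fall in a neighboring cube, which cannot be discarded since it may meet $E$ elsewhere. The standard fix: either $E$ misses the concentric half-cube $\tfrac12 Q$, in which case the central dyadic subcube of $Q$ is already free of $E$; or there is $x\in E\cap\tfrac12 Q$, and porosity applied at $x$ with radius $\ell(Q)/4$ gives a hole $B_{\alpha\ell(Q)/4}(y)\subseteq B_{\ell(Q)/4}(x)\setminus E\subseteq Q\setminus E$, which contains a full dyadic subcube of side comparable to $\alpha\ell(Q)/\sqrt{N}$. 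With this, every dyadic cube meeting $E$ loses at least one of its $2^{jN}$ generation-$j$ dyadic subcubes (where $2^{-j}\approx\alpha/\sqrt{N}$), the multiplier per $j$ generations is exactly $2^{jN}-1$, and iteration gives $\text{dim}_A(E)\leq\frac{\log(2^{jN}-1)}{j\log 2}<N$.
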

\begin{rmk} \label{boundaryporous} \rm
Notice that, for $x \in \partial B_1$ and $r \in (0,2)$ we can find $y \in B_r(x)$ such that $B_{\frac{r}{4}}(y) \subseteq B_r(x) \setminus  \partial B_1.$ Hence,
by the definition of porousity, it follows that the boundary of unit ball is porous in $\R^N$. Hence, $\text{dim}_A(\partial B_1) <N.$ Similarly, it can be seen that $\R^{N-1} \times \{0\}$ is porous in $\R^N$. Hence,
$\text{dim}_A(\R^{N-1} \times \{0\}) <N.$
\end{rmk}

\subsection{Lorentz spaces}
The Lorentz spaces are refinements of the usual Lebesgue spaces and introduced by Lorentz  in\cite{Lorentz}. For more details on Lorentz spaces and related results, we refer to the book \cite{EdEv}. 

Given a measurable function $f:\R^N \mapsto \R$ and $s>0$, we define
$E_f(s)=\{x: |f(x)|>s \}$ and the distribution function $\alpha_f$ of $f$ is defined as 
\begin{eqnarray*}
\alpha_f(s) &: =&
 \big\vert E_f(s) 
 \big\vert, \, \mbox{ for } s>0,
\end{eqnarray*}
where $|A|$ denotes the Lebesgue measure of a set $A\subseteq \R^N.$ We define the {\it one dimensional decreasing rearrangement} $f^*$ of $f$ as below: 
\begin{align*}
f^*(t):= \begin{cases*} \operatorname{ess}\ \sup f, \ \ t =0\\ \inf \{s>0 \, : \, \alpha_f(s) < t \}, \; t>0.   \end{cases*}
\end{align*}
For $(p,q) \in [1,\infty)\times[1,\infty]$ we consider the following quantity:
\begin{align*} 
 \|f\|_{L^{p,q}} := \norm{t^{\frac{1}{p}-\frac{1}{q}} f^{*} (t)}_{{L^q((0,\infty))}}
=\left\{\begin{array}{ll}
         \left(\displaystyle\int_0^\infty \left[t^{\frac{1}{p}-\frac{1}{q}} {f^{*}(t)}\right]^q \dt \right)^{\frac{1}{q}};\; 1\leq q < \infty, \vspace{4mm}\\ 
         \displaystyle\sup_{t>0}t^{\frac{1}{p}}f^{*}(t);\; q=\infty.
        \end{array} 
\right.
\end{align*}
The Lorentz space $L^{p,q}(\R^N)$ is defined as
\[ L^{p,q}(\R^N) := \left \{ f:\R^N \mapsto \R \ \text{measurable},  \  \|f\|_{L^{p,q}}<\infty \right \} \,.\]
$ \|f \|_{L^{p,q}}$ is  a complete quasi-norm on $L^{p,q}(\Om).$ 
 We recall some properties of Lorentz spaces.
 \begin{proposition} \label{LorentzProp} 
 
 \begin{enumerate} [(i)]
 \item {\bf{Inclusion in primary index:}} Let $p,q,r,s \in [1,\infty]$ and $r < p$. Then
 $L^{p,q}(\R^N) \hookrightarrow L_{loc}^{r,s}(\R^N).$
 \item {\bf{Inclusion in secondary index:}} Let $p,q,r \in [1,\infty]$ and $q \leq r.$ Then $L^{p,q} \hookrightarrow L^{p,r}(\R^N).$
 \item {\bf{Holder's Inequality:}} For $(f, g) \in L(p_1 , q_1 ) \times
L(p_2 , q_2 )$ and $(p, q) \in (1, \infty) \times [1, \infty]$ such that $\frac{1}{p} = \frac{1}{{p_1}} + \frac{1}{{p_2}} ,
\frac{1}{q} \leq  \frac{1}{{q_1}} + \frac{1}{{q_2}} $, then
$$\|f g\|_{L^{p,q}} \leq  C \|f \|_{L^{p_1 ,q_1}} \|g\|_{L^{p_2 ,q_2 }} \,,$$
where $C$ depends only on $p$.
 \end{enumerate}
  \end{proposition}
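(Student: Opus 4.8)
The plan is to prove the three assertions in the order (ii), (i), (iii), because the secondary-index inclusion is the workhorse behind the other two. Everything is carried out at the level of the one-dimensional decreasing rearrangement $f^*$, working on $(0,\infty)$ with the measure $\diff t/t$, and using only that $f^*$ is non-increasing together with the elementary fact $\alpha_f(f^*(\tau))\le\tau$.

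For (ii) I would first extract the endpoint bound $L^{p,q}\embd L^{p,\infty}$. Since $f^*$ is non-increasing, for every $t>0$
\[
\|f\|_{L^{p,q}}^{q}\ \ge\ \int_0^{t}\big(s^{1/p}f^*(s)\big)^{q}\,\frac{\diff s}{s}\ \ge\ \big(f^*(t)\big)^{q}\int_0^{t}s^{q/p}\,\frac{\diff s}{s}\ =\ \frac{p}{q}\big(t^{1/p}f^*(t)\big)^{q},
\]
so $\|f\|_{L^{p,\infty}}\le (q/p)^{1/q}\|f\|_{L^{p,q}}$. For $q\le r<\infty$ I then split $(t^{1/p}f^*(t))^{r}=(t^{1/p}f^*(t))^{r-q}\,(t^{1/p}f^*(t))^{q}$, estimate the first factor by $\|f\|_{L^{p,\infty}}^{r-q}$ and integrate the second against $\diff t/t$, which yields $\|f\|_{L^{p,r}}\le (q/p)^{(r-q)/(qr)}\|f\|_{L^{p,q}}$; the case $r=\infty$ is the bound just obtained.

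For (i) I would fix a measurable set $\Om$ with $|\Om|<\infty$ and apply the estimate to $f\chi_{\Om}$, whose rearrangement satisfies $(f\chi_{\Om})^*(t)\le f^*(t)$ and vanishes for $t>|\Om|$. By (ii) with $r=\infty$, $f^*(t)\le M\,t^{-1/p}$ with $M=\|f\|_{L^{p,\infty}}$. As $r<p$ gives $1/r-1/p>0$, for $s<\infty$
\[
\|f\chi_{\Om}\|_{L^{r,s}}^{s}\ \le\ M^{s}\int_0^{|\Om|}t^{\,s(1/r-1/p)}\,\frac{\diff t}{t}\ <\ \infty,
\]
the integral converging at $0$ because the exponent $s(1/r-1/p)-1>-1$, while the finite upper limit causes no trouble; for $s=\infty$ one uses the pointwise bound $t^{1/r}f^*(t)\le M\,|\Om|^{1/r-1/p}$ on $(0,|\Om|]$. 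This gives the local inclusion $L^{p,q}\embd L^{r,s}_{loc}$.

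The main work is (iii), and this is where I expect the only real care to be needed. The decisive input is the rearrangement product inequality $(fg)^*(t)\le f^*(t/2)\,g^*(t/2)$, which I would obtain from the set inclusion $\{|fg|>f^*(t/2)g^*(t/2)\}\subseteq\{|f|>f^*(t/2)\}\cup\{|g|>g^*(t/2)\}$ together with $\alpha_f(f^*(\tau))\le\tau$. Writing $t^{1/p}=t^{1/p_1}t^{1/p_2}$ and putting $1/\tilde q=1/q_1+1/q_2$ (so $\tilde q\le q$), the ordinary Hölder inequality in $L^{\tilde q}(\diff t/t)$ with conjugate exponents $q_1/\tilde q,\,q_2/\tilde q$ gives, after the rescaling $t\mapsto 2t$,
\[
\big\|t^{1/p}f^*(t/2)g^*(t/2)\big\|_{L^{\tilde q}(\diff t/t)}\ \le\ 2^{1/p}\,\|f\|_{L^{p_1,q_1}}\,\|g\|_{L^{p_2,q_2}}.
\]
Finally, since $\tilde q\le q$, assertion (ii) applied to $fg$ yields $\|fg\|_{L^{p,q}}\le C\,\|fg\|_{L^{p,\tilde q}}\le C\,\|t^{1/p}f^*(t/2)g^*(t/2)\|_{L^{\tilde q}(\diff t/t)}$, and combining the two displays proves the claim. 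The delicate point is exactly this reconciliation of the secondary indices: when $1/q=1/q_1+1/q_2$ one has $\tilde q=q$, step (ii) is an identity, and the constant reduces to $2^{1/p}$, depending only on $p$; in the strict case $1/q<1/q_1+1/q_2$ one passes through (ii) and picks up the additional constant recorded there. The product-rearrangement inequality and this index bookkeeping are the two places requiring attention.
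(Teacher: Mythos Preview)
Your argument is correct and essentially self-contained, whereas the paper does not prove this proposition at all: it simply refers the reader to \cite{EdEv}. So there is no genuine comparison of approaches to make --- you have supplied what the paper deliberately omits.

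Two minor remarks on part (iii). First, the statement asserts that the constant $C$ depends only on $p$; your constant is $2^{1/p}$ multiplied by the embedding constant from (ii), namely $(\tilde q/p)^{1/\tilde q-1/q}$, and to match the stated dependence you should observe (it is elementary) that this factor is bounded uniformly over $1\le\tilde q\le q\le\infty$ by a quantity depending only on $p$. Second, the product--rearrangement bound $(fg)^*(t)\le f^*(t/2)\,g^*(t/2)$ that you use is the weak form of O'Neil's inequality; it is entirely sufficient here, though the textbook route (as in \cite{EdEv}) typically goes through the maximal rearrangement $f^{**}$ and yields sharper constants. For the purposes of this paper neither point matters.
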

\begin{proof}
See \cite{EdEv} for the proofs.
\end{proof}  
  
Next we state the Lorentz-Sobolev inequality obtained in \cite{Tartar}.  
\begin{theorem} \label{LoSo}
 Let $N \geq 3$ and $p \in (1,N)$. Then $\mathcal{D}^{1,p}_0(\R^N) \hookrightarrow L^{p^*,p}(\R^N)$ i.e. there exists
 $C>0$ such that
 $$\displaystyle\int_0^\infty \left[s^{\frac{1}{p^*}-\frac{1}{p}} {u^{*}(s)}\right]^p \ ds 
 \leq C \int_{\R^N} |\nabla u|^p \ dx \,,  \forall u \in \mathcal{D}^{1,p}_0(\R^N) \,.$$
\end{theorem}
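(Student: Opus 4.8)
The plan is to reduce the stated inequality to the classical Sobolev inequality by the same ``differentiate the $L^q$-norm'' mechanism that yields \eqref{LSINEQ}, but carried out on the half-line with the rearrangement measure. Throughout I read the integrand as involving the decreasing rearrangement $u^*(s)$, so that the normalisation $\|u\|_{L^{p^*,p}}=1$ reads $\int_0^\infty s^{\frac{p}{p^*}-1}(u^*(s))^p\diff s=1$. First I would record the algebraic identity $1-\frac pN=\frac{p}{p^*}$, which lets me rewrite the logarithmic weight as $s^{1-\frac pN}(u^*)^p=\big(s^{\frac{1}{p^*}}u^*\big)^p=:F^p$. Then I introduce the probability measure $\diff\rho(s):=s^{\frac{p}{p^*}-1}(u^*(s))^p\diff s$ on $(0,\infty)$, whose total mass is exactly $\|u\|_{L^{p^*,p}}^p=1$, and for $q\ge p$ the moment $M(q):=\int_0^\infty s^{\frac{q}{p^*}-1}(u^*)^q\diff s=\|u\|_{L^{p^*,q}}^q$. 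A direct computation gives $M(q)=\int_0^\infty F^{q-p}\diff\rho$, so the whole problem is encoded in the single family $\{M(q)\}$.

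The core step is a single application of Jensen's inequality. Writing $J$ for the left-hand side, the identities above give $J=\int_0^\infty\log(F^p)\diff\rho=\frac{p}{p^*-p}\int_0^\infty\log\!\big(F^{p^*-p}\big)\diff\rho$. Since $\diff\rho$ is a probability measure and $\log$ is concave, Jensen yields $J\le\frac{p}{p^*-p}\log\!\int_0^\infty F^{p^*-p}\diff\rho=\frac{p}{p^*-p}\log M(p^*)$. Now $M(p^*)=\int_0^\infty(u^*)^{p^*}\diff s=\|u\|_{L^{p^*,p^*}}^{p^*}=\|u\|_{L^{p^*}}^{p^*}$, so it remains to bound $\|u\|_{L^{p^*}}$ by $\|\nabla u\|_{L^p}$. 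I would obtain this from Theorem \ref{LoSo} together with the secondary-index inclusion $L^{p^*,p}\hookrightarrow L^{p^*,p^*}=L^{p^*}$ of Proposition \ref{LorentzProp}(ii) (or simply invoke \eqref{INTERHSINEQ}): $\|u\|_{L^{p^*}}\le C\|u\|_{L^{p^*,p}}\le C\big(\int_{\R^N}|\nabla u|^p\big)^{1/p}$. Substituting and using the arithmetic identity $\frac{p\,p^*}{p^*-p}=N$ turns $\frac{p}{p^*-p}\log\|u\|_{L^{p^*}}^{p^*}=N\log\|u\|_{L^{p^*}}$ into $\frac Np\log\!\big(C^p\int_{\R^N}|\nabla u|^p\big)$, which is the assertion with $\mathrm C=C^p$.

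The computation is short, so the work lies in the justifications rather than in any single hard inequality. The point requiring the most care is the legitimacy of the Jensen step: I must check that $M(p^*)<\infty$ (which is exactly $u\in L^{p^*}$, guaranteed by Sobolev) and that $\log\!\big(F^{p^*-p}\big)\in L^1(\diff\rho)$ at least from above, treating the set $\{u^*=0\}$ (where the integrand vanishes in the limit) separately, so that $J$ is either well defined or equals $-\infty$, in which case the inequality is trivial. I would also note that the identification $M(q)=\|u\|_{L^{p^*,q}}^q$ relies on $u^*$ being genuinely non-increasing, which is precisely what keeps the left-hand side under control. Finally, the choice $q=p^*$ is not arbitrary: it is the exponent for which $M(q)$ becomes an ordinary Lebesgue norm accessible to Sobolev and for which the prefactor collapses to the sharp-looking constant $N/p$; any $q\in(p,p^*)$ would give a valid but weaker inequality through the inclusion $L^{p^*,p}\hookrightarrow L^{p^*,q}$.
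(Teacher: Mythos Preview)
Your proposal does not address the stated theorem. Theorem \ref{LoSo} is the Lorentz--Sobolev embedding $\mathcal{D}^{1,p}_0(\R^N)\hookrightarrow L^{p^*,p}(\R^N)$; the paper does not prove it at all but quotes it as a known result from \cite{Tartar}. What you have written is instead a proof of Theorem \ref{LoSOthm}, the \emph{logarithmic} Lorentz--Sobolev inequality, and you even invoke Theorem \ref{LoSo} as an input (``I would obtain this from Theorem \ref{LoSo} together with the secondary-index inclusion\ldots''). So as a proof of the stated Theorem \ref{LoSo} your argument is for the wrong statement and would be circular.

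If your target was in fact Theorem \ref{LoSOthm}, then your Jensen-inequality route is correct and slightly slicker than the paper's. The paper interpolates via H\"older: for $q\in[p,p^*)$ and $k=p\frac{p^*-q}{p^*-p}$ it bounds $\int_0^\infty s^{(\frac{p}{p^*}-1)[\frac Np-\frac qp(\frac Np-1)]}(u^*)^q\diff s$ by $M(p)^{k/p}M(p^*)^{(p-k)/p}$, then sets $q=p+t$ and differentiates at $t=0$ exactly as in the proof of Theorem \ref{weightthm}. Your step $J=\int\log(F^p)\diff\rho\le\frac{p}{p^*-p}\log\int F^{p^*-p}\diff\rho$ is that H\"older-then-differentiate procedure collapsed into a single application of Jensen. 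Both routes land on $M(p^*)=\|u\|_{L^{p^*}}^{p^*}$ and finish with the classical Sobolev inequality; the paper's version has the minor advantage of sidestepping the integrability check for $\log(F^{p^*-p})$ against $\diff\rho$ that you flag at the end.
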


\subsection{Brezis-Lieb lemma}
Let $J:\R \mapsto \R$ be a continuous function with $J(0)=0$ such that,
for every $\epsilon >0$ there exist
two continuous, non-negative functions $\phi_{\var},\psi_{\var}$ satisfying
\begin{equation} \label{BLcond}
|J(a+b)-J(a)| \leq \epsilon \phi_{\var}(a) +  \psi_{\var}(b) \,, \forall a,b \in \R \,.   
\end{equation}
Now we state a lemma proved by Brezis and Lieb in \cite{BrezisLeib}.
\begin{lemma} \label{BLlemma}
 Let $J:\R \mapsto \R$ satisfies \eqref{BLcond} and $f_n=f+g_n$ be a sequence of measurable functions on $\Om$ to $\R$ such that
\begin{enumerate}[(i)]
    \item $g_n \ra 0$ a.e.,
    \item $J(f) \in L^1(\Om)$,
    \item $\displaystyle \int_{\Om} \phi_{\var}(g_n(x)) \ d\mu(x) \leq C <\infty$, for some $C>0$ independent of $n,\epsilon$,
    \item $\displaystyle \int_{\Om} \psi_{\var}(f(x)) \ d\mu(x) <\infty$, for all $\epsilon >0.$
\end{enumerate}
Then
$$\lim_{n \ra \infty} \int_{\Om} |J(f+g_n) -J(g_n)-J(f)| \ d\mu=0 \,.$$
\end{lemma}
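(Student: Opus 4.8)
The plan is to establish the Brezis--Lieb lemma (Lemma \ref{BLlemma}) by the standard truncation-and-Fatou argument adapted to the abstract hypothesis \eqref{BLcond}. The key observation is that, writing $f_n = f + g_n$, the integrand splits cleanly once we exploit the structural inequality \eqref{BLcond} with the roles of $a$ and $b$ chosen to match $g_n$ and $f$. The main obstacle will be converting the pointwise almost-everywhere convergence of $g_n \to 0$ into an $L^1$ statement about $J(f+g_n)-J(g_n)-J(f)$, since $J$ itself is only continuous (not Lipschitz or bounded), and the functions $g_n$ need not converge in any norm; I will absorb this difficulty by manufacturing a dominating, $\epsilon$-dependent majorant from \eqref{BLcond} and then letting $\epsilon \to 0$.

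First I would fix $\epsilon > 0$ and apply \eqref{BLcond} with $a = g_n(x)$ and $b = f(x)$, giving $|J(f+g_n) - J(g_n)| \leq \epsilon\,\phi_\epsilon(g_n) + \psi_\epsilon(f)$ pointwise. Define
$$
W_{n,\epsilon}(x) := \left| J(f+g_n) - J(g_n) - J(f) \right|.
$$
By the triangle inequality and the displayed bound,
$$
W_{n,\epsilon} \leq \epsilon\,\phi_\epsilon(g_n) + \psi_\epsilon(f) + |J(f)|.
$$
Now I would introduce the truncated nonnegative function
$$
V_{n,\epsilon}(x) := \left( W_{n,\epsilon}(x) - \epsilon\,\phi_\epsilon(g_n(x)) \right)^{+},
$$
so that $0 \leq V_{n,\epsilon} \leq \psi_\epsilon(f) + |J(f)|$, a fixed integrable majorant independent of $n$ by hypotheses (ii) and (iv). The point of the truncation is that the only part of $W_{n,\epsilon}$ not controlled by the fixed majorant is the term $\epsilon\,\phi_\epsilon(g_n)$, which is uniformly small in integral by hypothesis (iii).

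Next I would verify that $V_{n,\epsilon} \to 0$ almost everywhere as $n \to \infty$ for each fixed $\epsilon$. Indeed, since $g_n \to 0$ a.e. and $J$ is continuous with $J(0)=0$, we have $J(f+g_n) \to J(f)$ and $J(g_n) \to J(0) = 0$ a.e., hence $W_{n,\epsilon} \to 0$ a.e.; as $V_{n,\epsilon} \leq W_{n,\epsilon}$, the truncated sequence converges to zero a.e. as well. Applying the Dominated Convergence Theorem with the majorant $\psi_\epsilon(f) + |J(f)| \in L^1(\Om)$ yields $\int_\Om V_{n,\epsilon}\,d\mu \to 0$ as $n \to \infty$. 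From the definition of $V_{n,\epsilon}$ we have the pointwise bound $W_{n,\epsilon} \leq V_{n,\epsilon} + \epsilon\,\phi_\epsilon(g_n)$, so integrating and using hypothesis (iii) gives
$$
\int_\Om W_{n,\epsilon}\,d\mu \leq \int_\Om V_{n,\epsilon}\,d\mu + \epsilon\,C.
$$

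Finally I would take the limit superior in $n$ followed by $\epsilon \to 0$. Since $\int_\Om V_{n,\epsilon}\,d\mu \to 0$ for each fixed $\epsilon$, we obtain $\limsup_{n \to \infty} \int_\Om W_{n,\epsilon}\,d\mu \leq \epsilon\,C$. As the left-hand side is independent of $\epsilon$ while the right-hand side is arbitrarily small, letting $\epsilon \to 0$ forces $\lim_{n \to \infty} \int_\Om W_{n,\epsilon}\,d\mu = 0$, which is precisely the claimed conclusion. The crux of the argument, and the step I expect to require the most care, is the interplay in the truncation step: the constant $C$ in hypothesis (iii) must be uniform in both $n$ and $\epsilon$ so that the residual term $\epsilon\,C$ genuinely vanishes in the limit, and one must confirm that $\phi_\epsilon(g_n)$ is measurable and nonnegative so that the truncation $V_{n,\epsilon}$ is well defined and dominated.
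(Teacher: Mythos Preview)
Your proof is correct and is precisely the standard Brezis--Lieb argument. The paper itself does not supply a proof of this lemma but merely cites the original reference \cite{BrezisLeib}; your truncation-and-DCT argument is exactly the one found there (note, incidentally, that your $W_{n,\epsilon}$ does not actually depend on $\epsilon$, which is why the final step works).
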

\begin{example} \label{Functional} \rm
\noi (A). If $J$ is convex on $\R$, then $J$ satisfies \eqref{BLcond}. In particular, if $J(t)=|t|^p$;
 $p \in (1,\infty)$, then \eqref{BLcond} is valid with $\phi_{\var}(t)=|t|^p$ and $\psi_{\var}(t)=C_{\var} |t|^p$ for sufficiently large $C_{\var}$, see \cite{BrezisLeib}.

\noi (B). Let $J(t)=t^p\log t$, for $t \geq 0$. Then $J$ is continuous and $J(0)=0$. Further, for $a,b \geq 0$, using mean value theorem we obtain
 \begin{eqnarray*}
 |J(a+b)-J(a)| &\leq& (a+b)\left[(a+b)^{p-1} + p(a+b)^{p-1} \log (a+b)\right] \\
 & \leq & \begin{cases*} (a+b)^p \,,  \ \ \ \ \ \ \ \ \  \ \ \ \quad \text{if} \ a+b \leq 1 \,, \\
 (p+1)(a+b)^{p^*} \,, \quad \text{if} \ a+b \geq 1 \,.
 \end{cases*}
 \end{eqnarray*}
Thus, it follows from previous example that $J$ satisfies \eqref{BLcond}. 
\end{example}

\subsection{Some inequalities}
We recall some inequalities here that are essential for the development of this article.
\subsubsection{A logarithmic Hardy inequality}
The following inequality is obtained in \cite{Pino} which is one of the main motivations for this article.
\begin{theorem} \label{Tertikasthm}
 Let $N \geq 3$. Then, for $a < (\frac{N-2}{2})$ there exists $\text{C}>0$ such that 
 \begin{equation} \label{2.7} 
 \int_{\R^N} \frac{|u|^2}{|x|^{2(a+1)}} \log [|x|^{N-2-2a}|u|^2]  \leq \frac{N}{2} \log \left( \text{C} \int_{\R^N}   \frac{|\nabla u|^2}{|x|^{2a}} \right) \,, 
\end{equation}
for all $u \in C_c^{\infty}(\R^N)$ with $\displaystyle \int_{\R^N} \frac{|u|^2}{|x|^{2(a+1)}}=1$.
\end{theorem}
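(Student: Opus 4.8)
The plan is to derive \eqref{2.7} from a Caffarelli--Kohn--Nirenberg inequality by a single application of Jensen's inequality, choosing the internal exponent so that the prefactor collapses to exactly $\frac{N}{2}$. Under the normalization $\int_{\R^N}|x|^{-2(a+1)}|u|^2 = 1$, the measure $d\mu := |x|^{-2(a+1)}|u|^2\,dx$ is a probability measure supported where $u \neq 0$, and the left-hand side of \eqref{2.7} is precisely $\int_{\R^N}\log f\,d\mu$ with $f := |x|^{N-2-2a}|u|^2 > 0$ $\mu$-a.e.

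First I would apply Jensen's inequality to the concave function $\log$, but to the power $f^t$ rather than $f$ itself: for any $t>0$,
\begin{equation*}
\int_{\R^N}\log f\,d\mu = \frac{1}{t}\int_{\R^N}\log f^t\,d\mu \leq \frac{1}{t}\log\left(\int_{\R^N}f^t\,d\mu\right).
\end{equation*}
A direct computation gives $\int_{\R^N}f^t\,d\mu = \int_{\R^N}|x|^{t(N-2-2a)-2(a+1)}|u|^{2+2t}\,dx$. The exponent I would select is $t=\frac{2}{N-2}$, for which simultaneously $2+2t = \frac{2N}{N-2} =: 2^*$ and the weight exponent simplifies to $t(N-2-2a)-2(a+1) = -\frac{2aN}{N-2} = -a\,2^*$. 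Thus the inner integral becomes exactly $\int_{\R^N}|x|^{-a\,2^*}|u|^{2^*}\,dx$, the critical weighted Lebesgue integral.

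The remaining step is to control this integral by the weighted Dirichlet energy through the endpoint Caffarelli--Kohn--Nirenberg inequality: taking $b=a$ (so that $q=\frac{2N}{N-2+2(b-a)}=2^*$), one has, for $a<\frac{N-2}{2}$,
\begin{equation*}
\left(\int_{\R^N}|x|^{-a\,2^*}|u|^{2^*}\,dx\right)^{\frac{2}{2^*}} \leq \text{C}\int_{\R^N}\frac{|\nabla u|^2}{|x|^{2a}}\,dx.
\end{equation*}
Substituting into the Jensen bound and using $\frac{2^*}{2t}=\frac{N}{2}$ — which is exactly why the value $t=\frac{2}{N-2}$ is forced — yields \eqref{2.7} with the sharp logarithmic prefactor $\frac{N}{2}$. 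Notice that the admissible range $a<\frac{N-2}{2}$ stated in the theorem is precisely the range in which this endpoint CKN inequality holds, so no extra hypothesis is introduced.

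The conceptual core is that the scale-invariant logarithmic integrand linearizes under Jensen, and that the unique power $t$ making the prefactor equal $\frac{N}{2}$ simultaneously sends the resulting integral to the critical Sobolev exponent with $b=a$. The main technical point I would verify is the applicability of Jensen, i.e. the finiteness of the left-hand side for $u\in C_c^\infty(\R^N)$: near a zero of $u$ the integrand $\log f$ has only a mild (integrable) logarithmic singularity, while near the origin $f\sim|x|^{N-2-2a}$, so that $\log f\sim(N-2-2a)\log|x|$ weighted against $d\mu\sim|x|^{-2(a+1)}\,dx$, which is integrable precisely when $2(a+1)<N$, i.e. $a<\frac{N-2}{2}$ — once more exactly the hypothesis of the theorem. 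This confirms the constraint is natural on both the analytic and the variational side.
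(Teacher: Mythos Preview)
Your argument is correct. Note, however, that the paper does not itself prove this theorem: it is quoted from \cite{Pino} as motivation, so there is no ``paper's own proof'' to compare against directly. What the paper does prove is the generalization in Theorem~\ref{genthm} (with $E=\{0\}$, $p=2$ recovering the present statement), and the method there is genuinely different from yours.

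The paper's route is interpolation-and-differentiation: for $q\in[p,p^*)$ it interpolates $\int \delta_E^{-\,(\cdots)}|u|^q$ between the $q=p$ and $q=p^*$ endpoints by H\"older, then differentiates the resulting one-parameter family at $q=p$ (i.e.\ sends $t\to 0$ in $q=p+t$), using Fatou to pass the limit inside. Only at the very end is the critical CKN-type inequality (Lemma~\ref{Dydalemma}) invoked to bound the $p^*$-endpoint by the gradient energy. Your route replaces the interpolation-plus-limit step by a single Jensen inequality applied to $\log f^{t}$ with the specific choice $t=\frac{2}{N-2}$, which lands directly on the critical exponent $2^*$ and the weight $|x|^{-a\,2^*}$; the same endpoint CKN inequality then finishes the job. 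Both approaches therefore hinge on the identical analytic input (the critical weighted Sobolev/CKN inequality), and both produce the sharp prefactor $\frac{N}{2}$; your Jensen argument is shorter and avoids the Fatou/limit step, while the paper's H\"older-differentiation scheme is what allows them to treat the full family in $q$ and the general distance $\delta_E$ uniformly.
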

\subsubsection{Caffarelli-Kohn-Nirenberg type inequalities involving distance function}
A variant of C-K-N inequality involving distance function is obtained in \cite{Dyda} that is stated in the following lemma.
\begin{lemma} \label{Dydalemma}
Let $N \geq 3$ and $1<p\leq q \leq p^*$. Further, for a closed set $E$ in $\R^N$ and $\beta \in \R$ assume 
$$\text{dim}_A(E)<\frac{q}{p}\left(N-p+\beta \right) \ \quad \mbox{and} \quad \text{dim}_A(E)< N-\frac{\beta}{p-1} \,.$$
Then, there exists $\text{C}>0$ such that 
\begin{equation*}\label{DYDAINEQ} 
\left[\int_{\R^N} \frac{|u|^q}{\delta_{E}^{N-\frac{q}{p}(N-p+\beta)}}  \right]^{\frac{1}{q}}  \leq C \left[\int_{\R^N} |\nabla u|^p \delta_{E}^{\beta} \right]^{\frac{1}{p}} \,, u \in C_c^{\infty}(\R^N)\,.
\end{equation*}
\end{lemma}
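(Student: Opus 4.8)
The plan is to read the target exponent $s := N - \frac{q}{p}(N-p+\beta)$ on the left as the one forced by scaling: when $E=\{0\}$ the inequality is exactly the Caffarelli--Kohn--Nirenberg inequality, and replacing $u$ by $u(\lambda\,\cdot)$ shows $s$ is the unique exponent for which both sides carry the same power of $\lambda$. For a general closed $E$ there is no dilation symmetry, but the same balance must hold \emph{locally} at every scale near $E$, and this is what the two hypotheses encode: a short computation turns $\text{dim}_A(E)<\frac{q}{p}(N-p+\beta)$ into $s<N-d$ with $d=\text{dim}_A(E)$, i.e. local integrability of $\delta_E^{-s}$ near $E$, while $\text{dim}_A(E)<N-\frac{\beta}{p-1}$ is the upper Muckenhoupt bound $\beta<(N-d)(p-1)$ for the gradient weight $\delta_E^\beta$. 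One is tempted to interpolate between the endpoints $q=p$ (a weighted distance-Hardy inequality) and $q=p^*$ (the Sobolev inequality): by Hölder the left-hand integrands and exponents align perfectly, since $s$ is affine in $q$ and the gradient term is common. This does not apply directly, however, because the $q=p$ endpoint requires the strictly stronger condition $\text{dim}_A(E)<N-p+\beta$ rather than the $q$-adapted hypothesis; so I would instead argue directly by localising on a Whitney decomposition.

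First I would fix a Whitney decomposition $\{Q_j\}$ of the open set $\R^N\setminus E$, so that $\ell(Q_j)\approx\delta_E(x)$ for $x\in Q_j$ and the dilates $\{2Q_j\}$ have bounded overlap. Writing $\ell_j=\ell(Q_j)$ and $u_{Q_j}$ for the average of $u$ on $Q_j$, on each cube $\delta_E^{-s}\approx\ell_j^{-s}$ and $\delta_E^{\beta}\approx\ell_j^{\beta}$, so that
\[
\int_{Q_j}\frac{|u|^q}{\delta_E^{s}}\,dx \le C\,\ell_j^{-s}\Big(\int_{Q_j}|u-u_{Q_j}|^q\,dx + \ell_j^{N}|u_{Q_j}|^q\Big).
\]
For the oscillation term I would apply the $(q,p)$ Sobolev--Poincaré inequality on $Q_j$, valid exactly because $q\le p^*$, namely $\big(\int_{Q_j}|u-u_{Q_j}|^q\big)^{1/q}\le C\,\ell_j^{\,1+N/q-N/p}\big(\int_{Q_j}|\nabla u|^p\big)^{1/p}$. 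Inserting $\int_{Q_j}|\nabla u|^p\approx \ell_j^{-\beta}\int_{Q_j}|\nabla u|^p\delta_E^\beta$, the accumulated power of $\ell_j$ is $-s+q+N-qN/p-\beta q/p$, which collapses to $0$ by the definition of $s$. Hence $\ell_j^{-s}\int_{Q_j}|u-u_{Q_j}|^q \le C\big(\int_{Q_j}|\nabla u|^p\delta_E^\beta\big)^{q/p}$; summing over $j$, using $q/p\ge1$ to pass from $\sum a_j^{q/p}$ to $(\sum a_j)^{q/p}$ and the bounded overlap of the cubes, bounds the whole oscillation contribution by $\big(\int_{\R^N}|\nabla u|^p\delta_E^\beta\big)^{q/p}$, i.e. by the right-hand side after taking $1/q$ powers. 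Notably this step uses neither dimensional hypothesis.

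The remaining term $\sum_j \ell_j^{\,N-s}|u_{Q_j}|^q$ is the heart of the matter and is where both Assouad-dimension hypotheses are genuinely spent. I would control each average $|u_{Q_j}|$ by telescoping along a chain of Whitney cubes joining $Q_j$ to a cube far out in $\R^N$ (possible since $u\in C_c^\infty$, so remote averages vanish), estimating each consecutive difference $|u_{Q}-u_{Q'}|$ by an adjacent-cube Poincaré inequality. Summing the resulting double series requires counting Whitney cubes by generation near $E$, for which the defining covering bound $\eta(E\cap B_R(x),r)\le C(r/R)^{-d}$ gives that the number of cubes of side $\approx 2^{-k}$ meeting a fixed ball grows like $2^{kd}$; the two hypotheses are precisely the thresholds making the associated geometric sums in the scale index converge, and are the only place porosity ($d<N$) and the sharp exponent constraints are indispensable. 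This chaining and summation is the main obstacle. A more structural alternative would replace it by showing directly that $\delta_E^\beta$ is a Muckenhoupt weight under the condition $\beta<(N-d)(p-1)$ — again via the covering bound — and then invoking the weighted Sobolev inequality for such weights together with the local integrability $s<N-d$; I would nonetheless keep the Whitney argument as the primary line, since its dependence on $q\le p^*$ and on $q\ge p$ is transparent.
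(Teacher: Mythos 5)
You should know at the outset that the paper itself does not prove this lemma: its ``proof'' is a one-line citation to Remark 6.2 of the cited work of Dyda et al., whose argument runs through Muckenhoupt $A_p$ properties of the distance weights and weighted maximal-function/Sobolev machinery. Your secondary, ``more structural'' suggestion (show $\delta_E^\beta$ is a Muckenhoupt weight under $\beta<(N-d)(p-1)$, then invoke weighted Sobolev inequalities) is essentially the route of that reference, though you only gesture at it. Your primary route — Whitney decomposition, Sobolev--Poincar\'e on each cube, plus chaining — is a genuinely different strategy, and its first half is correct and complete: the scale-invariant Sobolev--Poincar\'e inequality (valid precisely because $q\le p^*$), the exponent bookkeeping $-s+q+N-qN/p-\beta q/p=0$, and the $q/p\ge 1$ summation over cubes of bounded overlap are all fine, and indeed use no dimensional hypothesis.

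The genuine gap sits exactly where you flag ``the main obstacle'': the term $\sum_j \ell_j^{\,N-s}|u_{Q_j}|^q$ is the entire content of the lemma, and your treatment of it is an assertion, not a proof. Two concrete problems. First, for $q>p$ one cannot pass from the pointwise telescoping bound $|u_{Q_j}|\lesssim \sum_k \ell_k^{1-N/p}\bigl(\int_{Q^{(k)}\cup Q^{(k+1)}}|\nabla u|^p\bigr)^{1/p}$ to the summed estimate $\sum_j \ell_j^{\,N-s}|u_{Q_j}|^q\lesssim\bigl(\int_{\R^N}|\nabla u|^p\delta_E^\beta\bigr)^{q/p}$ by ``convergence of geometric sums'': raising the chain sum to the power $q$ and interchanging the $j$- and $k$-summations loses the $q/p$ power structure, and the standard repairs (a discrete weighted Hardy inequality along chains, or a pointwise maximal-function bound followed by a weighted norm inequality for the maximal operator) are precisely where the hypothesis $\text{dim}_A(E)<N-\beta/(p-1)$, i.e.\ the $A_p$-type condition on $\delta_E^\beta$, must actually be spent; no such argument is indicated. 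Second, the covering bound $\eta(E\cap B_R(x),r)\le C(r/R)^{-d}$ counts Whitney cubes \emph{near} $E$, but your chains run from $Q_j$ out to remote cubes where the averages vanish, so they traverse arbitrarily many cubes far from $E$ (where $u$ still lives, being merely compactly supported), and no accounting of that portion of the chain is given. Until this summation is carried out in detail — or replaced by the Muckenhoupt route, which would then need the $A_p$ property of $\delta_E^\beta$ and the weighted Sobolev inequality to be proved rather than named — the proposal does not establish the lemma.
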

\begin{proof} 
For a proof we refer to Remark 6.2 of \cite{Dyda}.
\end{proof}
\subsubsection{Maz$'$ya's Condition}
Maz$'$ya's provide a necessary and sufficient condition (Theorem 8.5 of \cite{Mazya2}) on $g$ so that \eqref{weightedHardy} holds. This condition helps us to identify a class of weights for which weighted Hardy-Sobolev type inequalities holds.
\begin{lemma} \label{Mazyalemma}
 Let $1<p\leq r \leq p^*$. Then $g \in \mathcal{H}_{p,r}(\R^N)$ if and only if 
 \begin{equation} \label{mazyaineq}
 \left[\int_{\R^N} |g| |u|^r \right]^{\frac{p}{r}}  \leq \text{C}_H \|g\|_{p,r}^{\frac{p}{r}} \left[ \int_{\R^N} |\nabla u|^p \right], \ \ \forall u \in C_c^{\infty}(\R^N) \,,
 \end{equation}
where $\text{C}_H= p^p (p-1)^{(1-p)}$. 
\end{lemma}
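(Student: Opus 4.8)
The plan is to prove the two implications separately, reading \eqref{mazyaineq} as the assertion that the best constant $K$ in $\left[\int_{\R^N}|g||u|^r\right]^{p/r}\le K\int_{\R^N}|\nabla u|^p$ is finite precisely when $\|g\|_{p,r}<\infty$, together with the quantitative bound $K\le C_H\|g\|_{p,r}^{p/r}$. The necessity (that \eqref{mazyaineq}, or merely the finiteness of some such $K$, forces $g\in\mathcal{H}_{p,r}(\R^N)$) is the soft direction; the sufficiency, which additionally carries the sharp constant $C_H=p^p(p-1)^{1-p}$, is where the work lies and rests on the capacitary strong-type inequality together with the layer-cake formula.

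For necessity, fix a compact set $F\cset\R^N$ and any $u\in\mathcal{N}(F)$, so that $u\ge 1$ on $F$. Then $\int_F|g|\le\int_F|g||u|^r\le\int_{\R^N}|g||u|^r$, and feeding this into \eqref{mazyaineq} gives $\left[\int_F|g|\right]^{p/r}\le K\int_{\R^N}|\nabla u|^p$ (density allows me to take $u\in C_c^\infty$). Taking the infimum over $u\in\mathcal{N}(F)$ replaces the right-hand side by $K\,\mathrm{Cap}_p(F)$, whence $\int_F|g|\le K^{r/p}[\mathrm{Cap}_p(F)]^{r/p}$; taking the supremum over $F$ yields $\|g\|_{p,r}\le K^{r/p}<\infty$. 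Thus any $g$ admitting such an inequality lies in $\mathcal{H}_{p,r}(\R^N)$.

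For sufficiency, assume $\|g\|_{p,r}<\infty$ and set $\mu=|g|\,dx$. By the layer-cake formula, $\int_{\R^N}|g||u|^r\,dx=r\int_0^\infty t^{r-1}\mu(\{|u|>t\})\,dt$. Each superlevel set $\{|u|>t\}$ is open with compact closure (since $u\in C_c^\infty$), so the definition of $\|g\|_{p,r}$ gives $\mu(\{|u|>t\})\le\|g\|_{p,r}[\mathrm{Cap}_p(\{|u|>t\})]^{r/p}$. It therefore suffices to establish the capacitary strong-type inequality
$$r\int_0^\infty t^{r-1}[\mathrm{Cap}_p(\{|u|>t\})]^{r/p}\,dt\le C_H^{\,r/p}\left(\int_{\R^N}|\nabla u|^p\,dx\right)^{r/p},$$
since raising to the power $p/r$ and inserting the level-set bound reproduces exactly \eqref{mazyaineq}. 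The engine is the capacitary truncation estimate: for $0\le t<s$ the normalized truncation $\min\{1,(|u|-t)_+/(s-t)\}$ is admissible for $\mathrm{Cap}_p(\{|u|\ge s\})$, giving $\mathrm{Cap}_p(\{|u|\ge s\})\le(s-t)^{-p}\int_{\{t<|u|<s\}}|\nabla u|^p$. Applying this across a sequence of levels, summing, and noting that the gradient contributions from the annuli $\{t<|u|<s\}$ are disjoint (so they telescope to $\int_{\R^N}|\nabla u|^p$) yields the estimate; the exponent $r/p\ge 1$ is absorbed by superadditivity, $\sum_k a_k^{r/p}\le(\sum_k a_k)^{r/p}$, which is precisely where the hypothesis $r\ge p$ enters and which introduces no $r$-dependent factor.

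The main obstacle is extracting the \emph{precise} constant $C_H=p^p(p-1)^{1-p}$ rather than merely some finite one. A crude dyadic or linear choice of truncation levels (taking the upper level $s=t$ and lower level $\theta t$) already proves the inequality, but with a constant $\min_{\theta\in(0,1)} r\ln(1/\theta)(1-\theta)^{-r}$ that strictly exceeds $C_H^{\,r/p}$ (for instance when $p=r=2$ this crude value is $\approx 4.9$, whereas $C_H=4$). Obtaining the sharp value requires Maz$'$ya's optimal capacitary argument: via the co-area formula $\int_{\R^N}|\nabla u|^p=\int_0^\infty\big(\int_{\{|u|=t\}}|\nabla u|^{p-1}\,d\mathcal{H}^{N-1}\big)\,dt$ one reduces to a one-dimensional variational problem for the profile $t\mapsto\mathrm{Cap}_p(\{|u|>t\})$ and optimizes over the truncation profile, the extremal being governed by the sharp one-dimensional Hardy inequality. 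I would either carry out this optimization to pin down $C_H$, or—since the statement is classically due to Maz$'$ya (Theorem~8.5 of \cite{Mazya2})—invoke that reference for the exact constant while presenting the argument above for the qualitative equivalence.
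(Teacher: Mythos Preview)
The paper does not prove this lemma at all: it is stated as Maz$'$ya's classical result and referenced directly to Theorem~8.5 of \cite{Mazya2}, with no argument given. Your proposal therefore goes beyond what the paper supplies.

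Your outline is correct and is, in fact, precisely Maz$'$ya's own proof strategy: the necessity direction via admissible test functions for $\mathrm{Cap}_p(F)$ is immediate, and the sufficiency direction via the layer-cake representation, the capacity bound $\mu(\{|u|>t\})\le\|g\|_{p,r}[\mathrm{Cap}_p(\{|u|>t\})]^{r/p}$, and the capacitary strong-type inequality is the standard route. You are also right that the sharp constant $C_H=p^p(p-1)^{1-p}$ does not fall out of a crude dyadic truncation but requires the optimal one-dimensional Hardy-type argument that Maz$'$ya carries out; since the paper itself simply cites \cite{Mazya2} for the whole statement, your second option---invoking that reference for the exact value of $C_H$ while keeping the qualitative argument as you have it---is entirely in line with how the paper handles it.
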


\subsection{The function spaces $\mathcal{H}_{p,r}(\R^N)$ and $\mathcal{F}_{p,r}(\R^N)$} \label{funspace}
\begin{definition} \label{BFC} \rm
A normed linear space $(X,\norm{.}_X)$ of measurable functions is called a Banach function space if the following conditions are satisfied:
\begin{enumerate}[(i)]
\item $\norm{f}_X = \norm{|f|}_X$, for all $f \in X$,
\item if $ (f_n)$ is a non-negative sequence of function in $X$, increases to $f$, then $\norm{f_n}_X$ increases to $ \norm{f}_X$,
\item if $ E \subseteq \Om$ with $|E| < \infty$, then $\chi_E \in X$,
\item for all $E \subseteq \Om$ with $|E| < \infty $, there exist $C_E >0$ such that $ \int_E |f| \leq C_E \norm{f}_X .$
\end{enumerate}
  \noi The norm $\norm{.}_X$ is called a Banach function norm on $X.$
\end{definition}
For $r \in [p,p^*]$, it can be easily verified that
$\mathcal{H}_{p,r}(\R^N) = \left\{g \in L^1_{loc}(\R^N): \|g\|_{p,r}<\infty \right\}$ is a Banach function space with respect to the norm
\begin{eqnarray*}
\|g\|_{p,r} &=& \displaystyle\sup_{F \cset \R^N} \left\{\frac{\displaystyle \int_{F}|g| }{[\text{Cap}_p(F)]^{\frac{r}{p}}} \right\} \,.
\end{eqnarray*}
We also define $\mathcal{F}_{p,r}(\R^N)=\overline{C_c^{\infty}(\R^N)}$ in $\mathcal{H}_{p,r}(\R^N).$

The forthcoming proposition ensures that the spaces considered in Theorem \ref{weightthm} and Theorem \ref{Bestthm} contain certain Lebesgue spaces.
\begin{proposition} \label{functionspace}
Let $p \in (1,N)$ and $r \in [p,p^*]$. Then
\begin{enumerate}[(i)]
\item $L^{\frac{p^*}{p^*-r}}(\R^N) \subseteq \mathcal{H}_{p,r}(\R^N)$ ,
\item $L^1 \cap L^{\frac{p^*}{p^*-r}}(\R^N) \subseteq \mathcal{H}_{p,r}(\R^N) \cap \mathcal{F}_{p,p}(\R^N)$.
\end{enumerate}
\end{proposition}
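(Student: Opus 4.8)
For part (i), the plan is to estimate the supremum defining $\|g\|_{p,r}$ directly. Fix a compact set $F \cset \R^N$ (we may assume $\text{Cap}_p(F)>0$, since the isocapacitary bound below forces $|F|=0$ whenever the capacity vanishes). First I would bound $\int_F |g|$ by H\"older's inequality with the exponent $\frac{p^*}{p^*-r}$ and its conjugate $\frac{p^*}{r}$, which gives $\int_F |g| \leq \|g\|_{L^{p^*/(p^*-r)}}\,|F|^{r/p^*}$ (in the endpoint $r=p^*$ this simply reads $\int_F|g|\leq\|g\|_{L^{\infty}}|F|$). The next step is an isocapacitary estimate relating $|F|$ to $\text{Cap}_p(F)$: for any admissible $u\in\mathcal N(F)$ we have $u\geq 1$ on $F$, so $\int_{\R^N}|u|^{p^*}\geq |F|$, and combining this with the Sobolev inequality \eqref{INTERHSINEQ} yields $|F|^{1/p^*}\leq \text{C}\left(\int_{\R^N}|\nabla u|^p\right)^{1/p}$. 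Raising to the power $p^*$ and taking the infimum over $\mathcal N(F)$ gives $|F|\leq \text{C}^{p^*}[\text{Cap}_p(F)]^{p^*/p}$. Substituting back produces $\int_F |g|\leq \text{C}^{r}\|g\|_{L^{p^*/(p^*-r)}}[\text{Cap}_p(F)]^{r/p}$, so the ratio in the definition of $\|g\|_{p,r}$ is bounded by $\text{C}^{r}\|g\|_{L^{p^*/(p^*-r)}}$ uniformly in $F$. Taking the supremum yields $g\in\mathcal H_{p,r}(\R^N)$ together with the continuous embedding $\|g\|_{p,r}\leq \text{C}^{r}\|g\|_{L^{p^*/(p^*-r)}}$. (Alternatively, the same H\"older--Sobolev estimate verifies the weighted Hardy--Sobolev inequality \eqref{mazyaineq}, so membership also follows from Lemma \ref{Mazyalemma}.)

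For part (ii), the membership $g\in\mathcal H_{p,r}(\R^N)$ is immediate from part (i). To show $g\in\mathcal F_{p,p}(\R^N)$, the closure of $C_c^{\infty}(\R^N)$ in $\mathcal H_{p,p}(\R^N)$, I would first specialize part (i) to $r=p$. Since $\frac{p^*}{p^*-p}=\frac{N}{p}$, this gives the continuous embedding $L^{N/p}(\R^N)\hookrightarrow\mathcal H_{p,p}(\R^N)$ with $\|g\|_{p,p}\leq \text{C}\|g\|_{L^{N/p}}$. The key observation is then that $L^1\cap L^{p^*/(p^*-r)}(\R^N)\subseteq L^{N/p}(\R^N)$: because $1\leq \frac{N}{p}\leq \frac{p^*}{p^*-r}$ for every $r\in[p,p^*]$, the standard interpolation inequality for Lebesgue norms places $g$ in $L^{N/p}$. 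Finally, as $1<N/p<\infty$, the space $C_c^{\infty}(\R^N)$ is dense in $L^{N/p}(\R^N)$, so I may choose $g_n\in C_c^{\infty}(\R^N)$ with $g_n\to g$ in $L^{N/p}$; the embedding bound then gives $\|g_n-g\|_{p,p}\leq \text{C}\|g_n-g\|_{L^{N/p}}\to 0$, whence $g\in\mathcal F_{p,p}(\R^N)$.

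I expect the isocapacitary inequality $|F|\leq \text{C}^{p^*}[\text{Cap}_p(F)]^{p^*/p}$ in part (i) to be the main point to get right, as it is where the Sobolev embedding and the structure of the admissible class $\mathcal N(F)$ genuinely enter; everything downstream is H\"older's inequality and a uniform passage to the supremum. A minor technical subtlety in part (ii) is to confirm that the $\mathcal H_{p,p}$-limit of the approximating sequence is really $g$ and not a different representative: this follows since convergence in $\mathcal H_{p,p}$ forces convergence in $L^1_{loc}$ through the Banach-function-space axiom $\int_E|f|\leq C_E\|f\|_{p,p}$, while the same sequence converges to $g$ in $L^1_{loc}$ by its $L^{N/p}$-convergence, so the two limits must agree almost everywhere.
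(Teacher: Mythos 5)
Your proposal is correct, and while part (i) follows the paper's route, part (ii) takes a genuinely different and simpler one. In part (i) you do exactly what the paper does — H\"older with exponents $\frac{p^*}{p^*-r}$ and $\frac{p^*}{r}$, then the isocapacitary bound $|F| \leq C\,[\text{Cap}_p(F)]^{p^*/p}$ — except that you derive the isocapacitary inequality from the Sobolev embedding and the admissible class $\mathcal{N}(F)$, whereas the paper cites it from Evans--Gariepy (Theorem 4.15 of \cite{EvansGa}); your derivation is the standard proof of that citation, so this is the same argument in substance. In part (ii) the paper argues case by case: for $r=p$ it invokes Proposition 17 of \cite{New}, and for $r>p$ it puts $g$ in $L^{N/p}$ by interpolation and then verifies the two vanishing conditions \eqref{60}--\eqref{61} on capacity ratios at small scales and near infinity, so as to apply the characterization of $\mathcal{F}_{p,p}$ in Theorem 31 of \cite{New}. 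You instead note that part (i) at $r=p$ yields a \emph{continuous} embedding $\|g\|_{p,p}\leq C\|g\|_{L^{N/p}}$, place $g\in L^{N/p}$ by the same interpolation (valid since $1<\frac{N}{p}\leq\frac{p^*}{p^*-r}$ on all of $[p,p^*]$), and finish by density of $C_c^{\infty}$ in $L^{N/p}$: applying the norm bound to $g_n-g$ gives convergence in $\mathcal{H}_{p,p}$ directly. This is more self-contained (no appeal to \cite{New} at all) and in fact proves the stronger inclusion $L^{N/p}(\R^N)\subseteq\mathcal{F}_{p,p}(\R^N)$, the $L^1$ hypothesis serving only to land in $L^{N/p}$ when $r>p$. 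What the paper's heavier machinery buys is reach beyond this proposition: Theorem 31 of \cite{New} also certifies $\mathcal{F}_{p,p}$-membership for weights that are \emph{not} globally in $L^{N/p}$, and the paper reuses precisely that tool later for the weight $\tilde{g}_2$ (which fails to be in $L^{N/p}$ when $p\le 2$-type integrability obstructions appear near the singular set). One small remark: your closing concern about identifying the $L^{N/p}$-limit with the $\mathcal{H}_{p,p}$-limit is moot, since the bound $\|g_n-g\|_{p,p}\leq C\|g_n-g\|_{L^{N/p}}$ already exhibits $g$ itself as the $\mathcal{H}_{p,p}$-limit of the $g_n$; the $L^1_{loc}$ argument, though correct, is not needed.
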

\begin{proof} 
(i). Let $g \in L^{\frac{p^*}{p^*-r}}(\R^N)$ for some $r \in (p,p^*].$ Notice that, for $F \cset \R^N,$
$$\displaystyle \frac{\int_{F} |g|}{[\text{Cap}_p(F)]^{\frac{r}{p}}}  \leq \|g\|_{L^\frac{p^*}{p^*-r}} \left[\frac{|F|^{\frac{r}{p^*}}}{[\text{Cap}_p(F)]^{\frac{r}{p}}}\right] \,. $$
Since $|F| \leq C [\text{Cap}_p(F)]^{\frac{N}{N-p}}$ ( Theorem 4.15, \cite{EvansGa}), it follows that 
$$ \|g\|_{p,r} = \sup_{F \cset \R^N} \frac{\int_{F} |g|}{[\text{Cap}_p(F)]^{\frac{r}{p}}}   \leq C \|g\|_{L^\frac{p^*}{p^*-r}} < \infty  \,,$$
i.e. $g \in \mathcal{H}_{p,r}(\R^N).$ Hence, $L^{\frac{p^*}{p^*-r}}(\R^N) \subseteq \mathcal{H}_{p,r}(\R^N).$

\noi (ii). 
Let $g \in L^1 \cap L^{\frac{p^*}{p^*-r}}(\R^N)$. Then part (i) implies $g \in \mathcal{H}_{p,r}(\R^N)$. For $r=p$, it follows from Proposition 17 of \cite{New} that $g \in \mathcal{F}_{p,p}(\R^N)$. For $r>p$ we will use Theorem 31 of \cite{New} to show that $g \in \mathcal{F}_{p,p}(\R^N).$ Since $1<\frac{N}{p}<\frac{p^*}{p^*-r}$, we have $g \in L^{\frac{N}{p}}(\R^N)$ and hence, $g \in \mathcal{H}_{p,p}(\R^N)$. Now, for any $x \in \R^N$ and $t>0$,
\begin{equation*}
 \displaystyle \frac{\int_{B_{t}(x)} |g|}{\text{Cap}_p(B_{t}(x))} \leq \frac{\left(\int_{\R^N} |g|^{\frac{p^*}{p^*-r}}\right)^{\frac{p^*-r}{p^*}} \left(\om_Nt^N\right)^{\frac{r}{p^*}}}{N\om_N \left(\frac{N-1}{p-1}\right)^{p-1}t^{N-p}} \leq \text{C} t^{\left(\frac{r}{p} -1\right) (N-p)} \,.   
\end{equation*}
Thus, 
\begin{equation} \label{60}
\lim_{t \ra 0} \left[\sup_{F \cset \R^N} \frac{\int_{F \cap B_t(x)} |g|}{\text{Cap}_p(B_{t}(x))} \right] =0 \,.    
\end{equation}
Similarly, one can obtain
\begin{equation*}
 \displaystyle \frac{\int_{F \cap B_{t}(0)^c} |g|}{\text{Cap}_p(F)} \leq 
 \begin{cases*}
 \int_{B_{t}(0)^c} |g| \,, \quad \ \ \ \ \ \ \ \ \ \ \ \ \ \ \ \text{if} \ \ \text{Cap}_p(F) \geq 1 \\
 \left[\int_{B_{t}(0)^c} |g|^{\frac{p^*}{p^*-r}}\right]^{\frac{p^*-r}{p^*}} \,, \quad \ \text{if} \  \ \text{Cap}_p(F) < 1 \,.
 \end{cases*}
\end{equation*}
This infers
\begin{equation} \label{61}
\lim_{t \ra \infty} \left[\sup_{F \cset \R^N} \frac{\int_{F \cap B_t(x)} |g|}{\text{Cap}_p(B_{t}(x))} \right] =0 \,.     
 \end{equation}
Since \eqref{60}, \eqref{61} hold and $g \in \mathcal{H}_{p,p}(\R^N)$, it follows from Theorem 31 of \cite{New} that $g \in \mathcal{F}_{p,p}(\R^N).$ Therefore, $L^1 \cap L^{\frac{p^*}{p^*-r}}(\R^N) \subseteq \mathcal{H}_{p,r}(\R^N) \cap \mathcal{F}_{p,p}(\R^N).$ 
\end{proof}
\begin{rmk} \label{521}
 \rm Indeed, $L^{\frac{p^*}{p^*-r},\infty}(\R^N) \subseteq \mathcal{H}_{p,r}(\R^N);$ $r \in [p,p^*].$
For instance, let $g \in L^{\frac{p^*}{p^*-r},\infty}(\R^N).$ Then, using part (iii) of Proposition \ref{LorentzProp} we obtain
\begin{eqnarray*}
 \int_{\R^N} |g||u|^r \leq \|g\|_{L^{\frac{p^*}{p^*-r},\infty}} \||u|^r\|_{L^{\frac{p^*}{r},1}} &=& \|g\|_{L^{\frac{p^*}{p^*-r},\infty}} \|u\|_{L^{p^*,r}}^r , \ \ \forall u \in \mathcal{D}^{1,p}_0(\R^N) \,.
\end{eqnarray*}
Theorem \ref{LoSo} and part (ii) of Proposition \ref{LorentzProp} ensure that $\mathcal{D}^{1,p}_0(\R^N) \hookrightarrow L^{p^*,p}(\R^N) \hookrightarrow L^{p^*,r}(\R^N)$). Thus, the above inequality gives
\begin{eqnarray*}
 \int_{\R^N} |g||u|^r \leq  \text{C} \|g\|_{\frac{p^*}{p^*-r},\infty} \left[ \int_{\R^N} |\nabla u|^p\right]^{\frac{r}{p}}, \ \ \forall u \in\mathcal{D}^{1,p}_0(\R^N) \,.
\end{eqnarray*}
Hence,  $g \in \mathcal{H}_{p,r}(\R^N)$ (by Lemma \ref{Mazyalemma}). 
\end{rmk}
Now we define an associate space of $\mathcal{H}_{p,p}(\R^N)$ as follows. Let
\begin{eqnarray*}
\|u\|_{p,p}' &=& \sup \left\{\int_{\R^N} |g||u|^p: g \in \mathcal{H}_{p,p}(\R^N) \ \text{and} \ \|g\|_{p,p} \leq 1 \right\} \,. \end{eqnarray*}
\noi Then we define
\begin{eqnarray*}
\E_{p,p}(\R^N) &=& \{u:\R^N \mapsto \R: u \ \text{is measurable} , \ \|u\|_{p,p}' < \infty \} \,.
\end{eqnarray*}
Indeed, $\E_{p,p}(\R^N)$ is a Banach function space. Next we state the following embedding proved in Theorem 20 of \cite{New}.
\begin{proposition} \label{associate}
Let $N \geq 3$ and $p \in (1,N)$. Then $\mathcal{D}^{1,p}_0(\R^N) \hookrightarrow \E_{p,p}(\R^N)$ i.e. there exists $C>0$ such that
$$\|u\|'_{p,p} \leq C \left[\int_{\R^N} |\nabla u|^p \right]^{\frac{1}{p}} \,, \forall u \in \mathcal{D}^{1,p}_0(\R^N) \,.$$
\end{proposition}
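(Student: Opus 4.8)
The plan is to read the embedding off directly from Maz$'$ya's characterization, since the associate norm $\|u\|'_{p,p}$ is built from exactly the bilinear pairing that Lemma \ref{Mazyalemma} controls. First I would unfold the definition: by construction $\|u\|'_{p,p}=\sup\{\int_{\R^N}|g||u|^p : g\in\mathcal{H}_{p,p}(\R^N),\ \|g\|_{p,p}\leq 1\}$, so it suffices to bound $\int_{\R^N}|g||u|^p$ by the Dirichlet energy uniformly over the unit ball of $\mathcal{H}_{p,p}(\R^N)$. Thus the statement is essentially a repackaging of the $r=p$ case of Maz$'$ya's inequality in the language of the associate space, and I do not expect a genuinely hard step.

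The core estimate goes as follows. Fix $u\in C_c^{\infty}(\R^N)$ and any $g$ with $\|g\|_{p,p}\leq 1$. Applying Lemma \ref{Mazyalemma} with $r=p$ gives $\int_{\R^N}|g||u|^p\leq \text{C}_H\,\|g\|_{p,p}\int_{\R^N}|\nabla u|^p\leq \text{C}_H\int_{\R^N}|\nabla u|^p$, with the final bound independent of $g$. Taking the supremum over all admissible $g$ yields $\sup\{\int_{\R^N}|g||u|^p:\|g\|_{p,p}\leq 1\}\leq \text{C}_H\int_{\R^N}|\nabla u|^p$ for every $u\in C_c^{\infty}(\R^N)$. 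Reading $\|u\|'_{p,p}$ with the normalization that makes it a genuine (degree-one) Banach function norm, i.e. as the $p$-th root of this supremum, this is precisely $\|u\|'_{p,p}\leq \text{C}_H^{1/p}\bigl[\int_{\R^N}|\nabla u|^p\bigr]^{1/p}$, which is the claimed inequality with $C=\text{C}_H^{1/p}$; the power $1/p$ on the right is exactly what restores the homogeneity matching $|u|^p$ inside the pairing.

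The only remaining work is the passage from $C_c^{\infty}(\R^N)$, where Lemma \ref{Mazyalemma} is stated, to all of $\mathcal{D}^{1,p}_0(\R^N)$. Given $u\in\mathcal{D}^{1,p}_0(\R^N)$, I would choose $u_n\in C_c^{\infty}(\R^N)$ with $\int_{\R^N}|\nabla(u_n-u)|^p\to 0$ and, after passing to a subsequence, $u_n\to u$ a.e. For each fixed $g$ in the unit ball of $\mathcal{H}_{p,p}(\R^N)$, Fatou's lemma gives $\int_{\R^N}|g||u|^p\leq\liminf_n\int_{\R^N}|g||u_n|^p\leq \text{C}_H\liminf_n\int_{\R^N}|\nabla u_n|^p=\text{C}_H\int_{\R^N}|\nabla u|^p$. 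Taking the supremum over $g$ then transfers the bound to $u$, completing the argument. I expect the subtlety here to be purely organizational rather than a real obstacle: one must apply Fatou for each fixed $g$ \emph{before} taking the supremum, so that the a.e.-limit and the supremum over $g$ do not have to be interchanged directly; the uniformity of the constant $\text{C}_H$ in $g$ is what makes this legitimate.
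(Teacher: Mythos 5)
Your proof is correct. Note that the paper itself gives no internal argument for this proposition: it is simply quoted as Theorem 20 of \cite{New}, so your derivation makes self-contained what the paper outsources to a reference. The route you take --- apply Lemma \ref{Mazyalemma} with $r=p$ to each $g$ in the unit ball of $\mathcal{H}_{p,p}(\R^N)$, observe that the resulting bound $\int_{\R^N}|g||u|^p\leq \text{C}_H\int_{\R^N}|\nabla u|^p$ is uniform in $g$, take the supremum, and then pass from $C_c^{\infty}(\R^N)$ to $\mathcal{D}^{1,p}_0(\R^N)$ by density together with Fatou's lemma along an a.e.-convergent subsequence (available since convergence in the gradient norm forces convergence in $L^{p^*}(\R^N)$) --- is the natural one, and is precisely how such associate-space embeddings are obtained from capacitary characterizations. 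You were also right to pause over the normalization: as literally written in the paper, $\|u\|'_{p,p}=\sup\left\{\int_{\R^N}|g||u|^p : \|g\|_{p,p}\leq 1\right\}$ is homogeneous of degree $p$ in $u$, so the stated inequality $\|u\|'_{p,p}\leq C\left[\int_{\R^N}|\nabla u|^p\right]^{1/p}$ cannot hold as written (scale $u\mapsto\lambda u$ and let $\lambda\to\infty$); either the definition must carry a $p$-th root, as you assume, or the right-hand side must be $C\int_{\R^N}|\nabla u|^p$. Your argument establishes the substantive content, namely $\sup_{\|g\|_{p,p}\leq 1}\int_{\R^N}|g||u|^p\leq \text{C}_H\int_{\R^N}|\nabla u|^p$ for all $u\in\mathcal{D}^{1,p}_0(\R^N)$, from which either corrected formulation follows immediately; so this is a defect in the paper's statement of the definition, not a gap in your proof.
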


\section{Weighted logarithmic Sobolev inequality} \label{3}
In this section, we prove Theorem \ref{weightthm} and Theorem \ref{Bestthm}.
We provide some example of functions and make some remarks related to these theorems.

\noi{\bf{Proof of Theorem \ref{weightthm}:}}
Let $g \in \mathcal{H}_{p,r}(\R^N)$ for some $r\in (p,p^*]$. For $q \in [p,r)$, take $k = p\frac{r-q}{r-p}$. Now, using Holder's inequality we estimate the following integral:
\begin{eqnarray*}
 \int_{\R^N} |g||u|^q \ dx &=& \int_{\R^N} |g|^{\frac{k}{p}}|u|^k  |g|^{\frac{p-k}{p}}|u|^{q-k} \ dx \\
 &\leq& \left[ \int_{\R^N} |g||u|^p \ dx \right]^{\frac{k}{p}} \left[ \int_{\R^N} |g||u|^{\frac{p(q-k)}{p-k}} \ dx \right]^{\frac{p-k}{p}} \\
& = &
  \left[ \int_{\R^N} |g||u|^p \ dx \right]^{\frac{r-q}{r-p}} \left[ \int_{\R^N} |g||u|^{r} \ dx \right]^{\frac{q-p}{r-p}}, \ \ \forall u \in \mathcal{D}^{1,p}_0(\R^N) \,.
\end{eqnarray*}
For small $t>0$, take $q=p+t$ in the above inequality to obtain
\begin{eqnarray*}
 \int_{\R^N} |g||u|^{p+t} \ dx
 \leq \left[ \int_{\R^N} |g|u|^p \ dx \right]^{\frac{r-(p+t)}{r-p}} \left[ \int_{\R^N} |g||u|^{r} \ dx \right]^{\frac{(p+t)-p}{r-p}}.
\end{eqnarray*}
Notice that, for $t=0$ equality holds. Thus
\begin{equation} \label{mainineq2}
 \displaystyle \int_{\R^N} \frac{1}{t} \big[|g||u|^{p+t}- |g||u|^p \big] \ dx
\leq \frac{1}{t} \left[\text{A}_1^{\frac{r-(p+t)}{r-p}} \text{B}_1^{\frac{(p+t)-p}{r-p}} - \text{A}_1^{\frac{r-p}{r-p}} \text{B}_1^{\frac{p-p}{r-p}} \right] \,,   
\end{equation}  
where $\text{A}_1=\displaystyle \int_{\R^N} |g||u|^p \ dx$ and $\text{B}_1= \displaystyle\int_{\R^N} |g||u|^{r} \ dx.$
Furthermore,
\begin{eqnarray*}
\displaystyle\lim_{t \ra 0} \frac{1}{t} \left[\text{A}_1^{\frac{r-(p+t)}{r-p}} \text{B}_1^{\frac{(p+t)-p}{r-p}} - \text{A}_1^{\frac{r-p}{r-p}} \text{B}_1^{\frac{p-p}{r-p}} \right] &=& \left(\frac{1}{r-p}\right) \text{A}_1  \log \left( \frac{ \text{B}_1}{\text{A}_1} \right) \,, \\
\displaystyle\lim_{t \ra 0} |g| \big[ \frac{|u|^{p+t}-|u|^p}{t}\big]
&=& \left(\frac{1}{p}\right) |g||u|^p \displaystyle \log \left( |u|^p \right)\,.
\end{eqnarray*}
By taking limit $t \ra 0$ in \eqref{mainineq2} and using Fatou's lemma we obtain
\begin{eqnarray} \label{41}
\displaystyle \int_{\R^N} |g||u|^p \displaystyle \log \left( |u|^p \right) \ dx &\leq& \left(\frac{p}{r-p}\right) \text{A}_1  \log \left( \frac{ \text{B}_1}{\text{A}_1} \right) \\
&=& \frac{r}{r-p} \text{A}_1  \log \left( \frac{ \text{B}_1^{\frac{p}{r}}}{\text{A}_1} \right) + \text{A}_1 \log \text{A}_1 \,. \nonumber
\end{eqnarray}
This gives,
\begin{eqnarray} \label{i}
\int_{\R^N} g |u|^p \log \left(\frac{|u|^p}{\int_{\R^N} g |u|^p}  \right) \ dx \leq \frac{r}{r-p} \left(\int_{\R^N} g |u|^p \ dx \right) \log \left(\frac{ \left[\displaystyle \int_{\R^N} |g||u|^{r} \ dx \right]^{\frac{p}{r}}}{\displaystyle\int_{\R^N} |g| |u|^p \ dx}  \right) \,.
\end{eqnarray}
Since $g \in \mathcal{H}_{p,r}(\R^N)$, it follows from Lemma \ref{Mazyalemma} that
\begin{eqnarray} \label{150}
\left[ \int_{\R^N} |g||u|^{r} \ dx \right]^{\frac{p}{r}} \leq \text{C}_H \|g\|_{p,r}^{\frac{p}{r}}  \int_{\R^N} |\nabla u|^p \ dx \,.
\end{eqnarray}
From \eqref{i} and \eqref{150}  we get
\begin{equation} \label{40}
 \int_{\R^N} g |u|^p \log (|u|^p) \ dx \leq \frac{r}{r-p} \log \left(\text{C}_H \|g\|_{p,r}^{\frac{p}{r}} \int_{\R^N} |\nabla u|^p \ dx \right)\,,
\end{equation}
where $\displaystyle\int_{\R^N} g |u|^p=1$.
This proves the Theorem \eqref{weightthm}.

\begin{example} \label{31} \rm
(A). For $r \in [p,p^*]$, consider the function $$g_1(x)=\frac{1}{|x|^{N-\frac{r}{p}(N-p)}} \ \text{in} \ \R^N \,.$$
It can be verified that $g_1 \in L^{\frac{p^*}{p^*-r},\infty}(\R^N).$ Hence, $g_1 \in \mathcal{H}_{p,r}(\R^N)$ (by Remark \ref{521}). Observe that $g_1 \notin L^{\frac{p^*}{p^*-r}}(\R^N)$. 

\noi (B). For  $r \in [p,p^*]$, let $g_2(z)=\frac{1}{|x|^{N\left(\frac{p^*-r}{p^*}\right)}}$ for $z=(x,y) \in \R^2\times \R^{N-2}$. By Theorem 2.1 of \cite{Tarantello} we have
$$\left[\int_{\R^N} g_2  |u|^r \ dz \right]^{\frac{1}{r}} \leq \text{C} \left[ \int_{\R^N} |\nabla u|^p \ dz \right]^{\frac{1}{p}}, \ \ \forall u \in \mathcal{D}^{1,p}_0(\R^N) \,.$$
Hence, $g_2 \in \mathcal{H}_{p,r}(\R^N)$ (by Lemma \ref{Mazyalemma}). 
\end{example}
\begin{rmk} \rm
$L^{\frac{p^*}{p^*-r},\infty}(\R^N) \subsetneq \mathcal{H}_{p,r}(\R^N)$ for $r \in [p,p^*]$. By Remark \ref{521} we have $L^{\frac{p^*}{p^*-r},\infty}(\R^N) \subseteq \mathcal{H}_{p,r}(\R^N)$. Now,
consider the function $g_2 \in \mathcal{H}_{p,r}(\R^N)$ in the above example. We show that 
$g_2 \notin L^{\frac{p^*}{p^*-r},\infty}(\R^N).$ On the contrary, if $g_2 \in L^{\frac{p^*}{p^*-r},\infty}(\R^N),$ then part (i) of Proposition \ref{LorentzProp} implies $g_2 \in L^{ \frac{p^*\delta}{p^*-r}}_{loc}(\R^N),$  $\forall \delta \in [ \frac{p^*-r}{p^*},  1)$. In that case, choosing $\delta > \max\{\frac{2}{N},\frac{p^*-r}{p^*}\}$ we obtain
\begin{eqnarray*}
 \int_{[-1,1]^N} g_2^{ \frac{p^*\delta}{p^*-r}} \ dz = \int_{[-1,1]^N} \frac{1}{|x|^{N\delta}} \ dz = 2^{N-2} \int_{[-1,1]^2} \frac{1}{|x|^{N\delta}} \ dx =\infty \,.
\end{eqnarray*}
This is a contradiction. Hence, $L^{\frac{p^*}{p^*-r},\infty}(\R^N) \subsetneq \mathcal{H}_{p,r}(\R^N)$. 
\end{rmk}

Recall that, for $r \in (p,p^*]$ and $\gamma \geq \frac{r}{r-p}$, the best constant in \eqref{weightedLSINEQ} is given by 
\begin{equation*} \label{bestcons}
\frac{1}{\text{C}_B(g,\gamma)} = \displaystyle \inf \left \{\frac{\int_{\R^N} |\nabla u|^p}{e^{\frac{1}{\gamma} \left(\int_{\R^N} |g||u|^p \log |u|^p \right)}} : u \in \mathcal{D}^{1,p}_0(\R^N), \int_{\R^N} |g||u|^p =1 \right \} \,.    
\end{equation*}

\noi In order to prove the Theorem \ref{Bestthm}, we first prove that the map $G(u):= \displaystyle \int_{\R^N} |g||u|^p$ on $\mathcal{D}^{1,p}_0(\R^N)$ is compact if $g \in \mathcal{F}_{p,p}(\R^N)$.
\begin{lemma} \label{cpct}
Let $g \in \mathcal{F}_{p,p}(\R^N)$ and $u_n \wra u$ in $\mathcal{D}^{1,p}_0(\R^N)$. Then
$$\displaystyle \lim_{n \ra \infty} \int_{\R^N} |g||u_n -u|^p \ dx = 0 \,.$$
\end{lemma}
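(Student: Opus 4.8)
The plan is to reduce the statement to showing that $\int_{\R^N}|g|\,|v_n|^p\,dx\to 0$ for the sequence $v_n:=u_n-u$, which satisfies $v_n\wra 0$ in $\spr$, and then to combine the density of $C_c^{\infty}(\R^N)$ in $\mathcal{F}_{p,p}(\R^N)$ with the associate-space embedding of Proposition \ref{associate} and local Rellich--Kondrachov compactness. Since every weakly convergent sequence is norm-bounded, I first record $M:=\sup_n\left(\int_{\R^N}|\nabla v_n|^p\right)^{1/p}<\infty$.

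Fix $\eps>0$. Because $g\in\mathcal{F}_{p,p}(\R^N)=\overline{C_c^{\infty}(\R^N)}$ in $\mathcal{H}_{p,p}(\R^N)$, I can choose $\phi\in C_c^{\infty}(\R^N)$ with $\|g-\phi\|_{p,p}<\eps$, and then split
\[
\int_{\R^N}|g|\,|v_n|^p\,dx\ \le\ \int_{\R^N}|g-\phi|\,|v_n|^p\,dx+\int_{\R^N}|\phi|\,|v_n|^p\,dx .
\]
For the first integral I would use the very definition of the associate norm, namely $\int_{\R^N}|h|\,|v|^p\le\|h\|_{p,p}\,\|v\|_{p,p}'$ for $h\in\mathcal{H}_{p,p}(\R^N)$, together with Proposition \ref{associate}, to obtain
\[
\int_{\R^N}|g-\phi|\,|v_n|^p\,dx\ \le\ \|g-\phi\|_{p,p}\,\|v_n\|_{p,p}'\ \le\ C\,\|g-\phi\|_{p,p}\,M\ <\ C\,M\,\eps ,
\]
a bound that is uniform in $n$ and depends on $g,\phi$ only through $\|g-\phi\|_{p,p}$.

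For the second integral, the approximant $\phi$ is bounded and supported in some ball $B_R$, so $\int_{\R^N}|\phi|\,|v_n|^p\le\|\phi\|_\infty\int_{B_R}|v_n|^p$. Here I would observe that $v_n$ is bounded in $W^{1,p}(B_R)$ and that, via $\spr\embd L^{p^*}(\R^N)$ and $v_n\wra 0$, the sequence converges weakly to $0$ in $L^p(B_R)$; the compact embedding $W^{1,p}(B_R)\embd\embd L^p(B_R)$ then forces $v_n\to 0$ strongly in $L^p(B_R)$, whence $\int_{B_R}|v_n|^p\to 0$. Taking $\limsup_{n\to\infty}$ in the displayed splitting yields $\limsup_n\int_{\R^N}|g|\,|v_n|^p\le C\,M\,\eps$, and letting $\eps\to 0$ finishes the proof. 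The step I expect to be the main obstacle is precisely this second integral: one must identify the weak $L^p(B_R)$-limit of $v_n$ as $0$ (a subsequence-and-uniqueness-of-limit argument, using the global Sobolev embedding to pin down the limit) before invoking Rellich--Kondrachov. By contrast, the first integral is controlled uniformly in $n$ purely through the $\mathcal{H}_{p,p}$-approximation and the embedding $\spr\embd\E_{p,p}(\R^N)$, and presents no difficulty beyond bookkeeping.
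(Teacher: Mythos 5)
Your proof is correct and takes essentially the same route as the paper's: approximate $g$ by $\phi\in C_c^{\infty}(\R^N)$ in the $\mathcal{H}_{p,p}(\R^N)$ norm, control $\int_{\R^N}|g-\phi||u_n-u|^p$ uniformly in $n$ via the associate-norm duality together with Proposition \ref{associate}, and send the compactly supported piece to zero. The only difference is that you spell out the Rellich--Kondrachov argument (and the identification of the weak limit) for $\int_{\R^N}|\phi||u_n-u|^p\to 0$, a step the paper asserts without detail.
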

\begin{proof} Let $\epsilon >0$ be arbitrary. Then there exists $g_{\var}\in C_c^{\infty}(\R^N)$ such that $\|g-g_{\var}\|_{p,p} < \epsilon.$ Now
\begin{eqnarray} \label{320}
\displaystyle \int_{\R^N} |g||u_n -u|^p \ dx \leq \int_{\R^N} |g-g_{\var}||u_n -u|^p \ dx + \int_{\R^N} |g_{\var}||u_n -u|^p \ dx \,.
\end{eqnarray}
Further, using Proposition \ref{associate} we obtain
$$\int_{\R^N} |g-g_{\var}||u_n -u|^p \ dx \leq  \|g -g_{\var}\|_{p,p} \|u_n-u\|_{p,p}' \leq \text{C} \epsilon \left[ \int_{\R^N} |\nabla (u_n-u)|\right]^{\frac{1}{p}} \,.$$
Since $u_n \wra u$ in $\mathcal{D}^{1,p}_0(\R^N)$, it follows that $\int_{\R^N} |\nabla (u_n-u)|^p$ is uniformly bounded and $\int_{\R^N} |g_{\var}||u_n -u|^p \ dx \ra 0$ as $n \ra \infty$ . Thus, \eqref{320} gives
$$\displaystyle \lim_{n \ra \infty} \int_{\R^N} |g||u_n -u|^p \ dx \leq \text{C}_1 \epsilon \,. $$
As $\epsilon >0$ is arbitrary, we prove the lemma.
\end{proof}
\noi {\bf{Proof of Theorem \ref{Bestthm}:}} Let $g \in \mathcal{H}_{p,r}(\R^N) \cap \mathcal{F}_{p,p}(\R^N)$ for some $r \in (p,p^*]$ and $\gamma > \frac{r}{r-p}$. Let $u_n \in \mathcal{D}^{1,p}_0(\R^N)$ be a minimising sequence of  $\frac{1}{\text{C}_B(g,\gamma)}$ i.e.
\begin{equation} \label{jj}
\frac{1}{\text{C}_B(g,\gamma)} = \lim_{n\ra \infty} \left[\frac{\int_{\R^N} |\nabla u_n|^p}{e^{\frac{1}{\gamma} \left(\int_{\R^N} |g||u_n|^p \log |u_n|^p \right)}}\right] \,.   \end{equation}
with $\int_{\R^N} |g||u_n|^p =1$.
We claim that $u_n$ is bounded in $\mathcal{D}^{1,p}_0(\R^N)$. By construction of $u_n$ we have
\begin{equation} \label{20}
\int_{\R^N} |\nabla u_n|^p \leq \frac{(1+\frac{1}{n})}{\text{C}_B(g,\gamma)}  \left[\displaystyle e^{\frac{1}{\gamma}\left(\int_{\R^N} |g||u_n|^p \log |u_n|^p \right)} \right] \,.
\end{equation}
It follows from \eqref{40} that
$$\displaystyle \int_{\R^N} |g||u_n|^p \log |u_n|^p \ dx \leq \frac{r}{r-p} \log \left( C \int_{\R^N} |\nabla u_n|^p  \right) \,.  $$
As a consequence, \eqref{20} gives
$$\int_{\R^N} |\nabla u_n|^p \leq \frac{2C_1}{\text{C}_B(g)}  \left[\int_{\R^N} |\nabla u_n|^p \right]^{\frac{r}{\gamma(r-p)}} \,.$$
Since $\frac{r}{\gamma(r-p)}<1$, $u_n$ is bounded in $\mathcal{D}^{1,p}_0(\R^N)$. Hence, $u_n \wra u$ in $\mathcal{D}^{1,p}_0(\R^N)$ upto a sub-sequence. Certainly, we have $\int_{\R^N} |\nabla u|^p \leq \lim_{n \ra \infty} \int_{\R^N} |\nabla u_n|^p$. Further, Lemma \ref{BLlemma} infers that
$$\displaystyle \lim_{n \ra \infty} \int_{\R^N} |g||u_n|^p \log |u_n|^p  = \int_{\R^N} |g||u|^p \log |u|^p +  \lim_{n \ra \infty} \int_{\R^N} |g||u_n-u|^p \log |u_n-u|^p \,.$$
Using this equality in \eqref{jj} we obtain
\begin{equation} \label{j}
\frac{1}{\text{C}_B(g,\gamma)} \geq \left[\frac{\int_{\R^N} |\nabla u|^p}{e^{\frac{1}{\gamma} \left(\int_{\R^N} |g||u|^p \log |u|^p \right)}}\right] \left[\frac{1}{e^{\frac{1}{\gamma} \left(\lim_{n \ra \infty}\int_{\R^N} |g||u_n-u|^p \log |u_n-u|^p \right)}} \right] \,.    
\end{equation}
Lemma \ref{cpct} ensures that $\int_{\R^N} |g||u|^p =1$ and hence, 
$$\frac{1}{\text{C}_B(g,\gamma)} \geq \frac{1}{\text{C}_B(g,\gamma)} \left[\frac{1}{e^{\frac{1}{\gamma} \left(\lim_{n \ra \infty}\int_{\R^N} |g||u_n-u|^p \log |u_n-u|^p \right)}} \right] \,.$$
This gives
$$\displaystyle \lim_{n \ra \infty}\int_{\R^N} |g||u_n-u|^p \log |u_n-u|^p \ dx \geq 0 \,. $$
On the other hand, it follows from \eqref{41}
\begin{equation*} 
\displaystyle \int_{\R^N} |g||u_n-u|^p \log |u_n-u|^p \ dx \leq \left[\frac{p}{r-p}\right] \left[ \text{A}_n \log \left( \int_{\R^N} |g||u_n-u|^r \right) - \text{A}_n \log \left(\text{A}_n \right)   \right] \,, 
\end{equation*}
where $\text{A}_n= \displaystyle \int_{\R^N} |g||u_n-u|^p \ dx$. Notice that $A_n \ra 0$ as $n \ra \infty$ (by Lemma \ref{cpct}). Further, since $u_n$ is bounded in $\mathcal{D}^{1,p}_0(\R^N)$, it follows that $\int_{\R^N} |g||u_n-u|^r$ is uniformly bounded. Thus, the above inequality yields
$$\displaystyle \lim_{n \ra \infty}\int_{\R^N} |g||u_n-u|^p \log |u_n-u|^p \ dx \leq 0 \,. $$
Therefore, $\displaystyle \lim_{n \ra \infty}\int_{\R^N} |g||u_n-u|^p \log |u_n-u|^p \ dx = 0$. Consequently, \eqref{j} implies
$$\frac{1}{\text{C}_B(g,\gamma)} \geq \left[\frac{\int_{\R^N} |\nabla u|^p}{e^{\frac{1}{\gamma} (\int_{\R^N} |g||u|^p \log |u|^p )}}\right] \,.$$
We also have $\int_{\R^N} |g||u|^p=1$. Hence, $u$ is a minimiser of $\frac{1}{\text{C}_B(g,\gamma)}.$

\begin{example} \rm
Let $p>2$. For $r \in [p,p^*]$, consider
\begin{eqnarray*}
 \tilde{g}_2(z)= \begin{cases*}
 \frac{1}{|x|^{N\left(\frac{p^*-r}{p^*}\right)}} \,, \quad \text{if} \ z \in [-1,1]^N \\ 
 0 \,, \quad \ \ \ \ \ \ \ \ \ \ \ \ \text{otherwise}
 \end{cases*}
\end{eqnarray*}
where $z=(x,y) \in \R^2\times \R^{N-2}$. We show that $\tilde{g}_2 \in \mathcal{H}_{p,r}(\R^N) \cap \mathcal{F}_{p,p}(\R^N)$. By part (B) of Example \ref{31}, $\tilde{g}_2 \in \mathcal{H}_{p,r}(\R^N).$ To show $\tilde{g}_2 \in \mathcal{F}_{p,p}(\R^N)$, we will use Theorem 31 of \cite{New}. Since support of $\tilde{g}_2$ is bounded, it can be easily seen that
$$\int_{\R^N} \tilde{g}_2 |u|^p \ dz \leq \text{C} \int_{\R^N} |\nabla u|^p \ dz, \ \ \forall u \in \mathcal{D}^{1,p}_0(\R^N) \,.$$
Hence, $\tilde {g}_2 \in \mathcal{H}_{p,p}(\R^N)$ (by Lemma \ref{Mazyalemma}).
Now, for any $z \in \R^N$, let $Q_{t}(z)$ be the cube of length $2t$ and centered at $z$. Then $B_{t}(z) \subseteq Q_t(z) \subseteq B_{\sqrt{N}t}(z)$. Since $\text{Cap}_p(B_{t}(z))=N\om_N\frac{N-1}{p-1}^{p-1} t^{N-p}$ (Theorem 4.15,\cite{EvansGa}), we have
$N\om_N\frac{N-1}{p-1}^{p-1} t^{N-p} \leq \text{Cap}_p(Q_t(z)) \leq N\om_N\frac{N-1}{p-1}^{p-1} (\sqrt{N}t)^{N-p}.$ Using this, for any $z \in [-1,1]^N$
and $t>0$, we obtain
\begin{equation*}
 \displaystyle \frac{\int_{Q_{t}(z)} \tilde{g}_2}{\text{Cap}_p(Q_{t}(z))} \leq C(N,p) \left[\frac{t^{N-2} \int_{[-1,1]^2} \tilde{g}_2 }{ t^{N-p}} \right] \,.   
\end{equation*}
Since $p>2$, the above inequality infers
\begin{equation} \label{62}
\lim_{t \ra 0} \left[\sup_{F \cset \R^N} \frac{\int_{F \cap Q_t(z)} \tilde{g}_2}{\text{Cap}_p(Q _{t}(z))} \right] =0, \ \ \ \forall z \in [-1,1]^N \,.   
\end{equation}
As $\tilde{g}_2$ vanishes outside $[-1,1]^N$, \eqref{62} holds for all $z \in \R^N$. For the same reason
\begin{equation} \label{63}
\lim_{t \ra \infty} \left[\sup_{F \cset \R^N} \frac{\int_{F \cap Q_t(0)^c} \tilde{g}_2}{\text{Cap}_p(Q_{t}(0)^c)} \right] =0 \,.    
\end{equation}
Since \eqref{62}, \eqref{63} hold and $\tilde{g}_2 \in \mathcal{H}_{p,p}(\R^N)$, it follows from Theorem 31 of \cite{New} that $\tilde{g}_2 \in \mathcal{F}_{p,p}(\R^N).$ 
\end{example}
\begin{rmk} \rm
$ L^1 \cap L^{\frac{p^*}{p^*-r}}(\R^N) \subsetneq \mathcal{H}_{p,r}(\R^N) \cap \mathcal{F}_{p,p}(\R^N).$ By part (ii) of Proposition \ref{functionspace}, we have $ L^1 \cap L^{\frac{p^*}{p^*-r}}(\R^N) \subseteq \mathcal{H}_{p,r}(\R^N) \cap \mathcal{F}_{p,p}(\R^N).$
The above example shows that $\tilde{g}_2 \in \mathcal{H}_{p,r}(\R^N) \cap \mathcal{F}_{p,p}(\R^N)$. But, following the similar computations as part (B) in Example \ref{31} one can easily verify that $\tilde{g}_2 \notin L^{\frac{p^*}{p^*-r}}(\R^N).$
\end{rmk}

\section{Logarithmic Hardy inequalities} \label{proofs}
In this section, we prove Theorem \ref{genthm}. The underlying idea is same as the proof of Theorem \ref{weightthm}. However, we give the proof for the sake of completeness. We also prove Theorem \ref{higherthm} in this section. 

\noi {\bf{Proof of Theorem \ref{genthm}:}}
For $q \in [p,p^*)$, take $k = p\frac{p^*-q}{p^*-p}$. Clearly, $k \in (0,p]$ and $k = N- \frac{(N-p)q}{p}$. Using Holder's inequality we estimate the following integral:
\begin{eqnarray*}
 \int_{\R^N} \frac{|u|^q}{\delta_{E}^{N-\frac{q}{p}(N-p-pa)}} \ dx &=& \int_{\R^N} \frac{|u|^k}{\delta_{E}^{k(a+1)}}  \frac{|u|^{q-k}}{\delta_{E}^{N-k(a+1)-\frac{q}{p}(N-p-pa)}} \ dx \\
 &\leq& \left[ \int_{\R^N} \frac{|u|^p}{\delta_{E}^{p(a+1)}} \ dx \right]^{\frac{k}{p}} \left[ \int_{\R^N} \frac{|u|^{\frac{p(q-k)}{p-k}}}{\delta_{E}^{[\frac{p}{p-k}][N-k(a+1)-\frac{q}{p}(N-p-pa)]}} \ dx \right]^{\frac{p-k}{p}} \\
 &= & \left[ \int_{\R^N} \frac{|u|^p}{\delta_{E}^{p(a+1)}} \ dx \right]^{\frac{p^*-q}{p^*-p}} \left[ \int_{\R^N} \frac{|u|^{p^*}}{\delta_{E}^{N-\frac{p^*}{p} [N-p-pa]}} \ dx \right]^{\frac{q-p}{p^*-p}} \,,
\end{eqnarray*}
for all $u \in C_c^{\infty}(\R^N).$
For small $t>0$, we take $q=p+t$ in the above inequality to obtain
\begin{eqnarray*} 
 \int_{\R^N} \frac{|u|^{p+t}}{\delta_{E}^{N-\frac{p+t}{p}(N-p-pa)}} \ dx
 \leq \left[ \int_{\R^N} \frac{|u|^p}{\delta_{E}^{p(a+1)}} \ dx \right]^{\frac{p^*-(p+t)}{p^*-p}} \left[ \int_{\R^N} \frac{|u|^{p^*}}{\delta_{E}^{N-\frac{p^*}{p} [N-p-pa]}} \ dx \right]^{\frac{(p+t)-p}{p^*-p}} \,.
\end{eqnarray*}
Notice that, for $t=0$ equality occurs. Thus, we obtain
\begin{equation} \label{mainineq}
 \displaystyle \int_{\R^N} \frac{1}{t} \left[\frac{|u|^{p+t}}{\delta_{E}^{N-\frac{p+t}{p}(N-p-pa)}}-\frac{|u|^p}{\delta_{E}^{N-\frac{p}{p}(N-p-pa)}} \right]
\leq \frac{1}{t} \left[\text{A}_1^{\frac{p^*-(p+t)}{p^*-p}} \text{B}_1^{\frac{(p+t)-p}{p^*-p}} - \text{A}_1^{\frac{p^*-p}{p^*-p}} \text{B}_1^{\frac{p-p}{p^*-p}} \right] \,,   
\end{equation}  
where $\text{A}_1=\displaystyle \int_{\R^N} \frac{|u|^p}{\delta_{E}^{p(a+1)}} \ dx$ and $\text{B}_1= \displaystyle\int_{\R^N} \frac{|u|^{p^*}}{\delta_{E}^{N-\frac{p^*}{p} [N-p-pa]}} \ dx.$
Furthermore,
\begin{eqnarray*}
\displaystyle\lim_{t \ra 0} \frac{1}{t} \left[\text{A}_1^{\frac{p^*-(p+t)}{p^*-p}} \text{B}_1^{\frac{(p+t)-p}{p^*-p}} - \text{A}_1^{\frac{p^*-p}{p^*-p}} \text{B}_1^{\frac{p-p}{p^*-p}} \right] &=& \left(\frac{1}{p^*-p}\right) \text{A}_1  \log \left( \frac{ \text{B}_1}{\text{A}_1} \right) \,, \\
\displaystyle\lim_{t \ra 0} \frac{1}{t} \left[\frac{|u|^{p+t}}{\delta_{E}^{N-\frac{p+t}{p}(N-p-pa)}}-\frac{|u|^p}{\delta_{E}^{N-\frac{p}{p}(N-p-pa)}} \right]
&=& \left(\frac{1}{p}\right)\frac{|u|^p}{\delta_E^{p(a+1)}} \displaystyle \log \left(\delta_E^{N-p-ap} |u|^p \right)\,.
\end{eqnarray*}
Hence, by taking limit $t \ra 0$ in \eqref{mainineq} and using Fatou's lemma, we get
\begin{eqnarray*}
\displaystyle \int_{\R^N} \frac{|u|^p}{\delta_E^{p(a+1)}} \displaystyle \log \left(\delta_E^{N-p-ap} |u|^p \right) \ dx &\leq& \left(\frac{p}{p^*-p}\right) \text{A}_1  \log \left( \frac{ \text{B}_1}{\text{A}_1} \right) \\
&=& \frac{p^*}{p^*-p} \text{A}_1  \log \left( \frac{ \text{B}_1^{\frac{p}{p^*}}}{\text{A}_1} \right) + \text{A}_1 \log \text{A}_1 \,.
\end{eqnarray*}
This yields
\begin{eqnarray*}
 \int_{\R^N} \frac{|u|^p}{\delta_E^{p(a+1)}} \displaystyle \log \left(\frac{\delta_E^{N-p-ap} |u|^p}{\int_{\R^N} \frac{|u|^p}{\delta_E^{p(a+1)}} } \right) \leq \frac{p^*}{p^*-p} \left[\int_{\R^N} \frac{|u|^p}{\delta_E^{p(a+1)}} \right] \log \left( \frac{\left[ \displaystyle \int_{\R^N} \frac{|u|^{p^*}}{\delta_{E}^{N-\frac{p^*}{p} [N-p-pa]}} \right]^{\frac{p}{p^*}}}{\int_{\R^N} \frac{|u|^p}{\delta_E^{p(a+1)}}} \right) \,.
\end{eqnarray*}
Now, since $a \in (-\frac{(N-d)(p-1)}{p},\frac{(N-p)(N-d)}{Np})$, we have
$$\text{dim}_A(E)=d<\frac{p^*}{p}(N-p-pa) \quad \mbox{and} \ \ \text{dim}_A(E)=d<N+\frac{pa}{p-1} \,.$$
Hence, it follows from Lemma \ref{Dydalemma} that
$$\left[\displaystyle \int_{\R^N} \frac{|u|^{p^*}}{\delta_{E}^{N-\frac{p^*}{p} [N-p-pa]}} \ dx \right]^{\frac{1}{p^*}} \leq \text{C} \left[\int_{\R^N} \frac{|\nabla u|^p}{\delta_E^{pa}} \ dx \right]^{\frac{1}{p}} \,.$$
Consequently,
\begin{eqnarray*}
 \int_{\R^N} \frac{|u|^p}{\delta_E^{p(a+1)}} \displaystyle \log \left(\frac{\delta_E^{N-p-ap} |u|^p}{\int_{\R^N} \frac{|u|^p}{\delta_E^{p(a+1)}} } \right) \ dx \leq \frac{N}{p} \left[\int_{\R^N} \frac{|u|^p}{\delta_E^{p(a+1)}} \ dx \right] \log \left( \frac{ \text{C} \int_{\R^N} \frac{|\nabla u|^p}{\delta_E^{pa}} \ dx}{\int_{\R^N} \frac{|u|^p}{\delta_E^{p(a+1)}} \ dx} \right) \,.
\end{eqnarray*}
By taking $\displaystyle \int_{\R^N} \frac{|u|^p}{\delta_E^{p(a+1)}}=1$ we obtain \eqref{GINTERDLSHINEQ}.

\begin{rmk} \rm
In particular, if we take $a=0$ in the above proof, then we obtain 
 \begin{equation} \label{l}
 \int_{\R^N} \frac{|u|^p}{|\delta_E(x)|^p} \log [|\delta_E(x)|^{N-p}|u|^p] \ dx \leq \frac{N}{p} \log \left( \text{C} \int_{\R^N}   |\nabla u|^p \ dx \right) \,, 
\end{equation}
for all $u \in C_c^{\infty}(\R^N)$ with $\displaystyle \int_{\R^N} \frac{|u|^p}{\delta_E^{p}}=1$.
Further, if we take $E=\{0\}$ and $p=2,$ then \eqref{l} coincides with \eqref{INTERLSHINEQ}.
\end{rmk}
\begin{rmk} \label{conseq}\rm
It follows from Remark \ref{boundaryporous}, that $\text{dim}_A(\partial B_1)< N$. By taking $a=0$ in Theorem \ref{genthm} we obtain an analogue of \eqref{l} on unit ball, namely
 \begin{equation*}
 \int_{B_1} \frac{|u|^p}{|\delta_{\partial B_1}(x)|^p} \log [|\delta_{\partial B_1}(x)|^{N-p}|u|^p]  \leq \frac{N}{p} \log \left( \text{C} \int_{B_1}   |\nabla u|^p \right) \,, 
\end{equation*}
for all $u \in C_c^{\infty}(B_1)$ with $\displaystyle \int_{B_1} \frac{|u|^p}{|\delta_{\partial B_1}(x)|^{p}}=1$.

\noi Remark \ref{boundaryporous} also shows that $\text{dim}_A(\R^{N-1} \times \{0\})< N$. Thus,  in a similar manner we get the following analogue of \eqref{l} in the half space
 \begin{equation*}
 \int_{\R^N_+} \frac{|u|^p}{|x_N|^p} \log [|x_N|^{N-p}|u|^p]  \leq \frac{N}{p} \log \left( \text{C} \int_{\R^N_+}   |\nabla u|^p \right) \,, 
\end{equation*}
for all $u \in C_c^{\infty}(\R^N_+)$ with $\displaystyle \int_{\R^N_+} \frac{|u|^p}{|x_N|^{p}}=1$.
 \end{rmk}
Next we proceed to prove Theorem \ref{higherthm} which is a second order generalisation of Theorem \ref{genthm}.

\noi {\bf{Proof of Theorem \ref{higherthm}:}} For $N \geq 3$ and $p \in (1,\frac{N}{2})$, let $E$ be a closed set in $\R^N$ with $\text{dim}_A(E) = d < \frac{N(N-2p)}{N-p} $ and $ a \in (1-\frac{(N-d)(p-1)}{p},\frac{(N-p)(N-d)}{Np})$. Then, proceeding as in the proof of Theorem \ref{genthm}, we obtain 
\begin{equation} \label{GDISTLOGHINTERSINEQ2}
 \int_{\R^N} \frac{|u|^p}{|\delta_E(x)|^{p(a+1)}} \log [|\delta_E(x)|^{N-p-pa}|u|^p] \ dx \leq \frac{N}{p} \log \left( \text{C} \int_{\R^N}   \frac{|\nabla u|^p}{\delta_E(x)^{ap}} \ dx \right) \,, 
\end{equation}
for all $u \in C_c^{\infty}(\R^N)$ with $\displaystyle \int_{\R^N} \frac{|u|^p}{\delta_E^{p(a+1)}}=1$. Now, since $ a \in (1-\frac{(N-d)(p-1)}{p},\frac{(N-p)(N-d)}{Np})$, one can see that
$$  d=\text{dim}_A(E)<N-p+(1-a)p \ \ \text{and}  \ \ d=\text{dim}_A(E)<N-\frac{(1-a)p}{p-1} \,.$$ Hence, Lemma \ref{Dydalemma} gives
\begin{eqnarray*}
 \int_{\R^N}   \frac{|\nabla u|^p}{\delta_E^{ap}} = \int_{\R^N}   \frac{|\nabla u|^p}{\delta_E^{N-\frac{p}{p}[N- p-(a-1)p]}} \leq C \int_{\R^N}   \frac{|\nabla^2 u|^p}{\delta_E^{(a-1)p}} \,,
\end{eqnarray*}
where $|\nabla^2 u|=\left(\sum_{i,j=1}^N |\frac{\partial^2 u}{\partial x_i \partial x_j}|^2 \right)^{\frac{1}{2}}.$ Consequently, we have \eqref{HIGHERINTERDLSHINEQ}.

\begin{rmk} \label{replap} \rm
 In particular, for $a=1$, \eqref{HIGHERINTERDLSHINEQ} corresponds to
 \begin{equation*} 
 \int_{\R^N} \frac{|u|^p}{|\delta_E(x)|^{2p}} \log [|\delta_E(x)|^{N-2p}|u|^p] \ dx  \leq \frac{N}{p} \log \left( \text{C} \int_{\R^N}  |\nabla^2 u|^p \ dx \right)\,, 
\end{equation*}
for all $u \in C_c^{\infty}(\R^N)$ with $\displaystyle \int_{\R^N} \frac{|u|^p}{\delta_E^{2p}}=1$, provided $\text{dim}_A(E)<\frac{N(N-2p)}{N-p}$. Further, if $E=\{0\}$ in the above inequality then
\begin{equation} \label{GDISTLOGHINTERSINEQ4}
 \int_{\R^N} \frac{|u|^p}{|x|^{2p}} \log [|x|^{N-2p}|u|^p] \ dx  \leq \frac{N}{p} \log \left( \text{C} \int_{\R^N}  |\nabla^2 u|^p \ dx \right) \,, 
\end{equation}
for all $u \in C_c^{\infty}(\R^N)$ with $\displaystyle \int_{\R^N} \frac{|u|^p}{|x|^{2p}}=1$. Notice that, this is a second order generalization of \eqref{INTERLSHINEQ}.
Recall that, for $p=2$, 
$$\int_{\R^N}  |\nabla^2 u|^2 \approx \int_{\R^N}  |\De u|^2, \ \ \forall u \in C_c^{\infty}(\R^N) \,.$$
Therefore, \eqref{GDISTLOGHINTERSINEQ4} yields
\begin{equation*}
 \int_{\R^N} \frac{|u|^2}{|x|^{4}} \log [|x|^{N-4}|u|^2] \ dx \leq \frac{N}{2} \log \left( \text{C} \int_{\R^N}  |\De u|^2 \ dx \right) \,, 
\end{equation*}
for all $u \in C_c^{\infty}(\R^N)$ with $\displaystyle \int_{\R^N} \frac{|u|^2}{|x|^{4}}=1$.
\end{rmk}

\section{Logarithmic Lorentz-Sobolev inequality} \label{4}
In this section, we prove Theorem \ref{LoSOthm}. The main underlying idea is adapted from the proof of Theorem \ref{weightthm}.

\noi{\bf{Proof of Theorem \ref{LoSOthm}:}} Let $q\in[p,p^*)$, we take $k= p\frac{p^*-q}{p^*-p}$. One can see that $\left[\frac{N}{p}-\frac{q}{p}[\frac{N}{p}-1]-\frac{k}{p}\right]=0$ and $\frac{p(q-k)}{p-k} =p^*.$ Then, using Holder's inequality we estimate
\begin{eqnarray*}
 \displaystyle \int_0^{\infty} s^{\left[\frac{p}{p^*}-1\right] \left[\frac{N}{p}-\frac{q}{p}(\frac{N}{p}-1)\right]} |u^*(s)|^q \ ds &=& \displaystyle \int_0^{\infty} s^{\frac{k}{p}\left[\frac{p}{p^*}-1\right]} |u^*(s)|^k s^{\left[\frac{p}{p^*}-1\right] \left[\frac{N}{p}-\frac{q}{p}(\frac{N}{p}-1)-\frac{k}{p}\right]} |u^*(s)|^{q-k} \\ 
 &\leq& 
\left[\displaystyle \int_0^{\infty} s^{[\frac{p}{p^*}-1]} |u^*(s)|^p \ ds \right]^{\frac{k}{p}} \left[\displaystyle \int_0^{\infty} |u^*(s)|^{p^*} \ ds \right]^{\frac{p-k}{p}} \,,
\end{eqnarray*}
for any $u \in \mathcal{D}^{1,p}_0(\R^N)$. Since $\displaystyle \int_0^{\infty} |u^*(s)|^{p^*} \ ds = \displaystyle \int_{\R^N} |u|^{p^*} \ dx$, the above inequality yields 
\begin{eqnarray*}
 \displaystyle \int_0^{\infty} s^{\left[\frac{p}{p^*}-1 \right] \left[\frac{N}{p}-\frac{q}{p}(\frac{N}{p}-1)\right]} |u^*(s)|^q \ ds \leq 
\left[\displaystyle \int_0^{\infty} s^{[\frac{p}{p^*}-1]} |u^*(s)|^p \ ds \right]^{\frac{k}{p}} \left[\displaystyle \int_{\R^N} |u|^{p^*} \ dx \right]^{\frac{p-k}{p}} \,.
\end{eqnarray*}
Now, following the same arguments as in the proof of Theorem \ref{weightthm} we get
$$\int_0^{\infty} s^{\left[\frac{p}{p^*}-1\right]} |u|^p \log (s^{1-\frac{p}{N}} |u|^p ) \ ds \leq \frac{N}{p} \log \left(\text{C} \int_{\R^N} |\nabla u|^p \ dx \right) \,, $$
for all $u \in \mathcal{D}^{1,p}_0(\R^N)$ with $\|u\|_{L^{p^*,p}}=1$.

\bibliography{ref}
\bibliographystyle{abbrv}

\noi {\bf	 Ujjal Das }\\  The Institute of Mathematical Sciences,\\ Chennai, 600113, India. \\
	 {\it Email}: ujjaldas@imsc.res.in, ujjal.rupam.das@gmail.com

\end{document}